\title{{Existence and Asymptotic Behavior of Solutions to a Quasilinear Hyperbolic-Parabolic Model of Vasculogenesis}}
\date{}
\author{Cristiana Di Russo$^{1,3}$, Alice Sepe$^{2,3}$}
\newcommand{\R}{{\mathbb R}}
\newcommand{\EE}{{\mathcal {E}}}
\newtheorem{theorem}{Theorem}[section]
\newtheorem{lemma}[theorem]{Lemma}
\newtheorem{defn}[theorem]{Definition}
\newtheorem{rmk}[theorem]{Remark}
\begin{document}
\setcounter{page}{1}

\maketitle
\vspace{-0.8cm}
\begin{center} 
{ $^1$Institut Camille Jordan\\Universit\'e Claude Bernard Lyon 1\\ \vspace{0.1cm}}
\end{center}
\begin{center} 
{$^2$Center for Modelling and Simulation in the Biosciences - IWR\\University of Heidelberg }
\end{center}
\begin{center}
{\vspace{0.2cm}$^3$Istituto per le Applicazioni del Calcolo "Mauro Picone"\\
Consiglio Nazionale delle Ricerche \vspace{0.1cm}}\end{center}
\vspace{0.3cm}

\begin{abstract}
We consider a hyperbolic-parabolic model of vasculogenesis in the multidimensional case. For this system we show the global existence of smooth solutions to the Cauchy problem, using suitable energy estimates. Since this model does not enter in the classical framework of dissipative problems, we analyze it combining the features of the hyperbolic and the parabolic parts. Moreover we study the asymptotic behavior of those solutions showing their decay rates by means of detailed analysis of the Green function for the linearized problem.
\end{abstract}

\pagestyle{myheadings}
\thispagestyle{plain}
\markboth{C. DI RUSSO AND A. SEPE}{A HYPERBOLIC-PARABOLIC MODEL FOR VASCULOGENESIS}

\section{Introduction}
In this work we present some analytical results on the PDEs model of vasculogenesis proposed by Gamba el al. \cite{GaAmCo}.\\ 
Vasculogenesis is the process of blood vessel formation occurring by the production of endothelial cells and is lead by a chemotactic phenomenon, i.e. cells direct their movement according to certain chemicals in their environment \cite{Car,Hel}. 
At first, it was believed to occur only during embryologic development but recently it was realized that vasculogenesis can also occur in the adult organism. Circulating endothelial progenitor cells were identified and it was observed that they were able to contribute to neovascularization, such as during tumor growth, or to the revascularization process following a trauma, e.g., after cardiac ischemia.

In this paper we proceed in the mathematical study of the following system proposed by Gamba et al. \cite{GaAmCo}
\begin{equation}
\label{Gamba_0}
\left\{
\begin{array}{l}
\partial_t \rho +\nabla \cdot (\rho u)  = 0, \\\\
\partial_t (\rho u) + \nabla \cdot (\rho u \otimes u) +\nabla P(\rho)= -\alpha \rho u +\mu\rho \nabla \phi,\\\\
\partial_t \phi =D \Delta \phi +a \rho -b \phi.
\end{array}
\right.
\end{equation}
Here $\rho $ is the endothelial cells density, $u$ the cells velocity and $\phi$ the concentration of the chemoattractant.
Moreover, the positive constants $D$, $a$, and $b$ are, respectively, the diffusion coefficient, the rate of release and the inverse of the characteristic degradation
time of chemoattractant. The other two positive constants, $\alpha$ and $\mu$, measure respectively
the friction of the cells on the substrate and the strenght of the
cell response to the chemical signals. 

Classical arguments for hyperbolic-parabolic systems 
fail in the case of system (\ref{Gamba_0}) due to the production term in the
third equation, i.e.:  $a>0$. The treatment of these terms is not an easy task so, as a first step in the analytical study of this system, we focus on a particular case. 
Actually, we restrict ourselves to a specific class of solutions,
i.e.: to small perturbations of non null constant states. Indeed, we are able
to show the existance of global solutions for this Cauchy problem in
the full multidimensional setting, and some precise time decay
estimates for those solutions.
A first mathematical results on system (\ref{Gamba_0}) was proved by Kowalczyk et al. in \cite{KoGaPre}, where a viscous term $\gamma\nabla^2 v$ in the second equation is considered to introduce an energy mechanism that models the slowing down of cells in the proximity of network structure. They performed a detailed linear stability analysis of the model in the two dimensional case, with the aim of checking their potential for structure formation starting from initial data which represent a continuum cell monolayer. This model is unstable at low cell densities, while pressure stabilizes it at high densities.

In \cite{DiFDo}, Di Francesco and Donatelli dealt with diffusive relaxation limits of system \eqref{Gamba_0} toward
Keller-Segel type systems, either hyperbolic-parabolic or hyperbolic-elliptic. 
In order to produce a nontrivial class of solutions to the hyperbolic
system which after a proper rescaling relax toward a Keller-Segel type model,
they provided by means of Friedrich's symmetrization technique and by linearization arguments, an existence theorem (local in time) for the approximating system. Moreover they proved the uniform estimates needed to justify the assumptions in case of initial
densities which are small perturbation of an arbitrary non zero constant state.

More in general, hyperbolic-parabolic systems have been widely studied by Kawa-shima and Shizuta \cite{Ka,KaShi2,ShiKa}. Under the smallness assumption on the initial data and the dissipation condition on the linearized system, they were able to prove global (in time) existence and asymptotic stability of smooth solutions to the initial value problem for a general class of symmetric hyperbolic-parabolic systems.\\
However system (\ref{Gamba_0}) does not enter in this framework. As a matter of fact, due to the presence of the source terms $a \rho$, the dissipative condition fails.\\
If we linearize the differential part of (\ref{Gamba_0}) we get the semilinear hyperbolic-parabolic model, introduced by Hillen to describe chemosensitive movements   \cite{Hi2},
\begin{equation}
\label{sistema_GUMA}
\left\{
\begin{array}{l}
\partial_{t} \rho +\nabla \cdot v = 0, \\\\
\partial_{t} v + \gamma^2\nabla \rho= -c(\phi,\nabla\phi)v+h(\phi,\nabla \phi)g(\rho),\\\\
\partial_{t} \phi =\Delta \phi +f(\rho,\phi),
\end{array}
\right.
\end{equation}
where $\rho$ is the density of the population with finite speed $\gamma$, $v=\rho u$ the flux and $\phi$ the chemoattractant concentration. 
With reference to the one dimensional case, a first result of local and global existence for weak solutions, under the assumption of turning rate's boundness, was proved in \cite{HiSte}. Recently Guarguaglini et al. in \cite{GuMaNaRi} proved more general results of this model under weaker hypotheses, by showing a general result for global stability for a zero constant state in the Cauchy problem and for a small constant state in the
Neumann problem. These results have been obtained using the general theory of linearized operators, and an accurate analysis of their nonlinear perturbations.
Proceeding along these lines, in \cite{DiNaRi} the authors presented a global existence theorem and the asymptotic behavior for smooth solutions with small initial data to the Cauchy problem, for a simplified version of system (\ref{sistema_GUMA}) in the two dimensional case. Moreover in \cite{Di,Di2} it has been considered the multidimensional model (\ref{sistema_GUMA}), and showed the global existence of smooth solutions with small initial data to the Cauchy problem and determinated their asymptotic behavior.

Since in our case it is not possible to apply the technique of the semilinear case, we follow a different approach. 
The basic idea is to consider the hyperbolic and parabolic equation ``separately'', and to take advantage of their respective properties. Let us explain our approach in more details.

We look at the hyperbolic part of system (\ref{Gamba_0}) without the source term $\mu\rho\nabla \phi$, i.e. we consider isentropic Euler equations with damping. This system enters in the general framework proposed by Hanouzet and Natalini. In \cite{HaNa}, they determined  sufficient conditions which guarantee the global existence in time of smooth solutions for small initial
data, which are the entropy dissipation and the Shizuta-Kawashima conditions.\\
The first one is a condition for systems which are endowed with a strictly convex entropy. Even if the strict convexity guarantees that the entropy estimates are equivalent to the $L^2$ estimate, and the dissipation  yields the invariance in the same norm, this condition is too weak to prevent the formation of singularities. Indeed there exist systems that satisfy this condition such that there is no global solutions for some arbitrarily small initial data.
The condition (SK) is an adaptation to hyperbolic problems of the Kawashima condition for hyperbolic-parabolic ones. In terms of stability it guarantees the necessary coupling between conserved and non conserved quantities in order to have dissipation effects, in both the sets of state variables.\\ 
Following the work by Hanouzet and Natalini \cite{HaNa} and Yong \cite{Yo}, our approach is based on energy estimates for the parabolic and hyperbolic equations.
As a matter of fact, even if classical arguments fail in the estimate of the source term $\mu\rho\nabla \phi$, we are able to treat it thanks to particular estimates of the parabolic equation.

Once that the global existence for smooth solutions has been obtained for perturbation of small constant states, we are able to determine the asymptotic behavior for large times of solutions, by using the decay rates of the Green functions. Our strategy consists in using the decomposition of the Green function of dissipative hyperbolic systems done by Bianchini at al. \cite{BiHaNa} and its precise
decay rates. Indeed  in \cite{BiHaNa} the authors proposed a detailed description of the multidimensional Green function for a class of partially dissipative systems.
They analyzed the behavior of the Green function for the linearized problem, decomposing it into two main terms. The first
term is the diffusive one, and consists of heat kernels, while the faster term consists of the hyperbolic part. Moreover they gave a more precise description of the behavior of the diffusive part, which is decomposed into four blocks decaying with different rates, and the conservative one. 

By using these refined estimates we were able to determine the asymptotic behavior of smooth solutions. \\

The article is organized as follows. Next section deals with the modeling background relative to system (\ref{Gamba_0}).
In the second section we recall some basic results about dissipative hyperbolic systems satisfying the Shizuta-Kawashima condition. In the subsequent section we show the global existence of small solutions by means of energy estimates. Finally, the last section is devoted to the study of the decay properties of small and smooth solutions to the quasilinear system (\ref{Gamba_0}).

\section{Modeling Background}

The formation of new blood vessels, called vasculogenesis, is a process lead by a chemotactic phenomenon, i.e. cells direct their movement according to certain chemicals in their environment. As a matter of fact, recent works \cite{Car,Hel} have confirmed that endothelial cells in the process of vascular network formation exchange signals by the release and absorption of Vascular Endothelial Growth Factor
(VEGF-A). This growth factor can bind to specific receptors on the cell surface and induce
chemotactic motion along its concentration gradient \cite{Fer}. This communication by chemical signals determines how cells arrange and organize themselves. \\
As shown in \cite{Mu1,Mu2}, chemotaxis is decisive in many biological processes. For example, the formation of cells aggregations (amoebae, bacteria, etc.) occurs during the response of the populations to the change of the chemical concentrations in the environment. Moreover, also in multicellular organisms, chemotaxis of cells populations plays a crucial role throughout the life cycle: during embryonic development it is important in organizing
cell positioning, for example during gastrulation \cite{DorWe} and patterning of the nervous system \cite{PaWuRao}; in the adult life, it directs immune cells migration to sites of inflammation \cite{Wu}, fibroblasts into wounded regions and during cancer growth it allows tumor cells to invade the surrounding environment \cite{CoSiSe} or stimulating new blood vessel growth \cite{LaKa}.\\
This biological phenomenon can be described at different scales. For example, by considering the population density as a whole, it is possible to obtain macroscopic models of partial differential equations. One of the most celebrated model of this class is the parabolic one proposed by Patlak in 1953 \cite{Pa} and subsequently by Keller and Segel in 1970 \cite{KeSe3}.\\
However, the approach of PKS model is not always sufficiently precise to describe the biological phenomena \cite{FiLaPe}. As a matter of fact, diffusion leads to fast dissipation or explosive behaviors and prevents us to observe intermediate organized structures, like aggregations. This approach describe processes on a long time scale, while on a short time range one gets a suited description from models with finite characteristic speed.\\
Kinetic transport equations describe quite well the movement of single organism. For example the ``run and tumble'' (the movement along straight lines, the sudden stop and the change of direction) can be modeled by a stochastic process called  velocity-jump process \cite{HiOt,Ste}.

At an intermediate scale, the process can be described by means of hyperbolic differential equations. \\
This class of models can be derived as a fluid limit of transport equations, but with a different
scaling, namely the hydrodynamic scaling $t\rightarrow \epsilon t$, $x\rightarrow \epsilon x$ \cite{ChaMarPeSc}.

Hyperbolic models can also be obtained by phenomenological derivations, as done by Gamba et al. \cite{GaAmCo,SeAmGi} to describe the vasculogenesis process. In \cite{GaAmCo} the authors proposed a model including chemotaxis  as a fundamental mechanisms for cell-to-cell communication in order to find key parameters in the complexity of the formation of vascular network. This biological process proceeds along three main stages: migration and early network formation, network remodeling and differentiation in tubular structures. The model proposed in \cite{GaAmCo} focused on the first stage of the process. 

The experimental results are encoded into a mathematical model starting from the assumptions that
the cell population can be described by
a continuous distribution of density $\rho$ and velocity $u$, moreover it is 
also assumed the presence of a concentration $\phi$ of chemoattractant.
The cell population in the early stages of its
evolution can be modeled as a fluid
of non-directly interacting particles and is accelerated
by gradient of chemoattractant released by cells, which diffuse and degrade in
finite time.

From these assumptions follows the system
\begin{equation}
\label{Gamba_I}
\left\{
\begin{array}{l}
\partial_t \rho +\nabla \cdot (\rho u)  = 0, \\\\
\partial_t (\rho u) + \nabla \cdot (\rho u \otimes u) +\nabla P(\rho)= -\alpha \rho u +\mu\rho \nabla \phi,\\\\
\partial_t \phi =D \Delta \phi +a \rho -b \phi,
\end{array}
\right.
\end{equation}
where $D$, $a$, and $b$ are, respectively, the diffusion coefficient, the rate of release and the inverse of the characteristic degradation
time of chemoattractant, $\mu$ measures the strength of
cell response, and $\alpha$ the friction of the cells on the substrate. 

This system is derived in a classical way by Continuum Mechanics indeed the first equation describes mass conservation, the second one is a momentum balance with a chemotactic
force and the last is a diffusion equation for the
chemoattractant produced by endothelial cells and degrading in time;
in particular, the convective term on the right hand side of the second equation allows describing cell migration. 
As indicated in \cite{AmBuPre} it is based on the assumptions that: (i) endothelial cells show persistence in their motion;
(ii) endothelial cells communicate via the release and absorption of molecules of a soluble growth
factor and this chemical factor can be reasonably identified with VEGF-A (Serini et al. \cite{SeAmGi}); (iii) the chemical factors, released by cells, diffuse and degrade in time; (iv) endothelial cells neither duplicate nor die during the process;
(v) cells are slowed down by friction due to the interaction with the fixed substratum;
(vi) closely packed cells mechanically respond to avoid overcrowding.

On the basis of experiments and theoretical insights, the authors in \cite{SeAmGi} showed that non-linear mechanics and chemotactic cellular dynamics fit into a model able to reproduce with great accuracy the formation of capillary networks in vitro.

The model (\ref{Gamba_I}) is able to reproduce several experimentally observed facts, e.g. that the mean chord length is approximately independent on the initial cell density or that connected networks are formed only above a critical initial density as shown in \cite{GaAmCo,SeAmGi}. Moreover the authors provided a strong evidence that endothelial cells number and the range of activity of a chemoattractant factor regulate vascular network formation by flanking biological experiment, theoretical insights, and numerical simulations.

\section{System Properties}

Let us consider the Cauchy problem for the following hyperbolic-parabolic system
\begin{equation}
\label{sistemaPer}
\left\{
\begin{array}{l}
\partial_t \tilde{\rho} +\nabla \cdot \tilde{v}  = 0, \\\\
\partial_t  \tilde{v} + \nabla \cdot \left(\tilde{v}\otimes \frac{\tilde{v}}{\tilde{\rho}}\right) +\nabla P(\tilde{\rho})= -\alpha\tilde{v} +\mu\tilde{\rho} \nabla \tilde{\phi},\\\\
\partial_t \tilde{\phi} =D \Delta\tilde{\phi} +a \tilde{\rho} -b{\tilde{\phi}},
\end{array}
\right.	
\end{equation}
\begin{equation}\label{Cauchy}
\tilde{\rho}(x,0)=\rho_0(x),\qquad \tilde{u}(x,0)=u_0(x), \qquad \tilde{\phi}(x,0)=\phi_0(x).
\end{equation}
We made the assumption
$$
P^\prime(\tilde{\rho})>0
$$ 
to ensure the strictly hyperbolicity of system (\ref{sistemaPer}). \\
Our aim is to prove that, under suitable assumptions, the Cauchy problem (\ref{sistemaPer})-(\ref{Cauchy})
admits a global smooth solution for small initial data. In particular, we are interested in solutions of the form $(\tilde{\rho},\tilde{v},\tilde{\phi})= (\rho+\overline{\rho},v,\phi+\overline{\phi})$, where $(\overline{\rho}, 0 ,\overline{\phi})$ is a constant stationary solution to the problem with $\overline{\rho}>0$ and $\overline{\phi}=\frac{a}{b}\overline{\rho}$, and $(\rho,v,\phi)$ is a  perturbation. In this case, we can rewrite system (\ref{sistemaPer}) as follows:
\begin{equation}
\label{sistemaPper}
\left\{
\begin{array}{l}
\partial_t \rho +\nabla \cdot v  = 0, \\\\
\partial_t v + \nabla \cdot \left(v\otimes \frac{{v}}{\rho+\overline{\rho}}\right) +\nabla P(\rho+\overline{\rho})= -\alpha v +\mu(\rho +\overline{\rho})\nabla \phi,\\\\
\partial_t \phi =D \Delta \phi +a \rho -b{\phi}. 
\end{array}
\right.
\end{equation}

Now, we show some properties of the hyperbolic-parabolic system (\ref{sistemaPper}). To this end, we rewrite it in the following compact form:
\begin{equation}
\left\{
\begin{array}{l}
\partial_{t} U +  \sum\limits_{j=1}^n \partial_{x_j}f_j (U+\overline{U})=g(U+\overline{U})  + h(U+\overline{U},\nabla\phi),\\\\
\partial_t {\phi} =D \Delta {\phi} +a { \rho} -b  {\phi},
\end{array}
\right.\nonumber
\end{equation}
where $U=({\rho},{v})$, $\overline{U}=(\overline{\rho},0)$, $f_j(U+\overline{U})=\left({v}_j, \frac{{v}_1 {v}_j}{\rho+\overline{\rho}},\ldots, \frac{{v}_j^2}{\rho+\overline{\rho}}+ P(\rho+\overline{\rho}),\ldots,\frac{{v}_n {v}_j}{\rho+\overline{\rho}} \right)$, $g(U+\overline{U})=(0, -\alpha {v})$ and $h(U+\overline{U},\nabla {\phi})=(0,\mu (\rho+\overline{\rho}) \nabla{\phi})$.


\subsection{Strictly Entropy Dissipative Condition}\label{LocStrDiss}
Let us consider the hyperbolic part of  (\ref{sistemaPper}), that is 
\begin{equation}
\label{sistHyp}
\left\{
\begin{array}{l}
\partial_t \rho +\nabla \cdot v = 0, \\\\
\partial_t v + \nabla \cdot \left(v\otimes \frac{{v}}{\rho+\overline{\rho}}\right) +\nabla P(\rho+\overline{\rho})= -\alpha v.
\end{array}
\right.
\end{equation}

First of all, we want to prove that system (\ref{sistHyp}) is endowed with an entropy function, that is a convex real function $\mathcal{E}$ such that there exist related entropy-fluxes $q_j$ satisfying the following condition
\begin{align}
	(f_j')^t\nabla \mathcal{E} = q_j', \nonumber
\end{align}
where $f_j$ are the fluxes of the system (\ref{sistHyp}), for $j=1,...,n$. \\
To ensure the existence of  entropy-fluxes $q_j$, it is sufficient to prove that 
\begin{align}
( f_j')^t \mathcal{E}'' \quad \mbox{is symmetric},\nonumber
\end{align}
or equivalently that $\mathcal{E}'' f_j'$ is symmetric, for each $j=1,...,n$ (see \cite{Bou1}). 

Once we have proved the existence of an entropy function for system (\ref{sistHyp}), the following additional equation for the entropy evolution can be written:
\begin{align}
	\partial_t (\mathcal{E}(U)-\nabla \mathcal{E}(\hat{U}) \cdot U)&+  \sum_{j=1}^n \partial_{x_j} (q_j(U)-\nabla \mathcal{E}(\hat{U}) f_j(U))\nonumber \\
	&= (\nabla \mathcal{E}(U)-\nabla \mathcal{E}(\hat{U}) )\cdot(g(U)-g(\hat{U})), \nonumber
\end{align}
where $\hat{U}$ is an equilibrium state for the system (\ref{sistHyp}) (i.e. $g(\hat{U})=0$). From this equation, we deduce that the integral of $\mathcal{E}(U) - \nabla \mathcal{E}(\hat{U})\cdot U$ is decreasing in time, if the term on the right-hand side is negative.\\
Denoted by $\gamma$ the set of equilibrium states of the system (\ref{sistHyp}), this property is encoded in the following definition.

\begin{defn}
An entropy $\mathcal{E}$ for the system (\ref{sistHyp}) is dissipative at $\hat{U}$, with $\hat{U} \in \gamma$, if it satisfies the inequality
\begin{align}
	(\nabla \mathcal{E}(U) - \nabla \mathcal{E}(\hat{U})) \cdot (g(U)-g(\hat{U})) \le 0,\nonumber 
\end{align}
for any $U$ in a neighborhood of $\hat{U}$.
\end{defn}

Clearly, an entropy function is dissipative, if it is dissipative at $\hat{U}$, for each $\hat{U} \in \gamma$.

Next, if $\mathcal{{E}}$ is a strictly convex function, we can introduce the entropy variable
\begin{align}
	W:= \nabla \mathcal{E}(U), \nonumber
\end{align}
and the functions
\begin{align}
	\mathcal{{E}}^*(W)&:= W \cdot \Phi (W)- \mathcal{{E}}(\Phi(W)), \nonumber\\
	q_j^*(W)&:= W \cdot f_j(\Phi(W))- q_j(\Phi(W)),\nonumber
\end{align}
where $\Phi=(\Phi_1,\Phi_2):= (\nabla \mathcal{E})^{-1}$. 
Let us set now $A_0=(\mathcal{{E}}^*)''(W)$, $A_j=f_j'(\Phi(W))A_0$ and ${G}(W)=g(\Phi(W))$, then we rewrite the system  (\ref{sistHyp}) in the entropy variable as
\begin{align}
A_0 \partial_t W + \sum_{j=1}^n A_j \partial_{x_j} W = {G} (W). \label{sistHyp_W}
\end{align}
Let us observe that the matrix $A_0$ is symmetric positive definite and $A_j$ is symmetric, for each $j=1,...,n$. 

Now, we take $\mathcal{U}$ an open subset of $\R^{n+1}$ and define
\begin{align}
\gamma&:= \left\{U \in \mathcal{U} \, : \,\, g(U)=0 \right\}, \nonumber\\
	\Gamma&:=\nabla \mathcal{{E}}(\gamma)=\left\{W \in \nabla \mathcal{E}(\mathcal{U}) \, : \,\, G(W)=0 \right\}. \nonumber
\end{align}
Let us observe that  the dissipative condition ensures the existence of a real positive matrix $B=B(W, \overline{W})$ such that, for every $W$ in a suitable neighborhood of $\overline{W}$,
\begin{align}
	G_2(W)= - B(W, \overline{W}) (W_2-\overline{W}_2). \nonumber 
\end{align}
We refer to \cite{HaNa} for more details. 
\begin{defn}
The system (\ref{sistHyp}) is strictly entropy dissipative, if there exists a real matrix  $B(W, \overline{W})\in \R^{n}\times\R^{n} $, positive definite, such that
\begin{align}
	G_2(W)= - B(W, \overline{W}) (W_2-\overline{W}_2), \nonumber 
\end{align}
for every $W\in \nabla \mathcal{E}(\mathcal{U})$ and $\overline{W}=(\overline{W_1},\overline{W_2})\in \Gamma$.
\end{defn}

In our case, we can consider for system (\ref{sistHyp}) the canonical entropy function
\begin{equation}
\mathcal{{E}}(\rho+\overline{\rho},v)=\frac{1}{2}\frac{v^2}{\rho+\overline{\rho}}+(\rho+\overline{\rho})\int_0^{\rho+\overline{\rho}}\frac{P(\tau)}{\tau^2}d \tau,\nonumber
\end{equation}
then 
\begin{align*}
\mathcal{{E}}_\rho(\rho+\overline{\rho},v)&=-\frac{1}{2}\frac{v^2}{(\rho+\overline{\rho})^2}+\frac{P(\rho+\overline{\rho})}{\rho+\overline{\rho}}+\int_0^{\rho+\overline{\rho}}\frac{P(\tau)}{\tau^2}d \tau,\\
\mathcal{E}_v(\rho+\overline{\rho},v)&=\frac{v}{\rho+\overline{\rho}}.
\end{align*}
It is easy to prove that system (\ref{sistHyp}), endowed with the entropy $\mathcal{E}$, satisfies the \textit{strictly entropy dissipative condition}.
Moreover, according to \cite{HaNa}, the definition of dissipative entropy is invariant for affine perturbation, so if we consider the function 
\begin{align}
	\mathcal{\tilde{E}}(U)= \mathcal{E}(U+\overline{U})-\mathcal{E}(\overline{U})- \nabla \mathcal{E}(\overline{U}) \cdot U, \label{entropia}
\end{align}
we have that $\mathcal{\tilde{E}}(U)$ is a quadratic function and still an entropy for (\ref{sistHyp}). This system, endowed with $\mathcal{\tilde{E}}(U)$, satisfies the strictly entropy dissipative condition as well. \\     
 Let us finally observe that, in our case, 
\begin{align}
	W:=\nabla  \mathcal{\tilde{E}}(U)=\nabla \mathcal{E}(U+ \overline{U})-\nabla \mathcal{E}(\overline{U}), \nonumber
\end{align}
and
\begin{align}
	\gamma= 
	\left\{U \in \mathcal{U} \, : \,\, U=(\rho,0) \right\},\nonumber
\end{align}
\begin{align}
	\Gamma= 
	\left\{W \in\nabla  \mathcal{\tilde{E}}(\mathcal{U}) \, : \,\, W_2=0 \right\}. \nonumber
\end{align}


\subsection{The Shizuta-Kawashima Condition}\label{SKcond}
This section is devoted to prove that system (\ref{sistHyp_W}) satisfies the Shizuta-Kawashima condition \cite{ShiKa} that is: 
\begin{description}
	\item[(SK)]  every eigenvector of $\sum\limits_{j=1}^{n} f'_j(\overline{U})\xi_j$ is not in the null space of $g'(\overline{U})$, for every $\xi\in \R^n -\{0\}$.
\end{description}

Let us define $A(U+\overline{U})=\sum\limits_{j=1}^n f_j'(U+\overline{U}){\xi_j}$, with $\xi\in \R^n-\{0\}$, so 
\begin{align}
	A(\overline{U})=\left(
\begin{array}{cccccc}
0 &\xi_1& \ldots  & \xi_n \\
 P'(\overline{\rho})\xi_1 & 0&\ldots  &0\\
\ldots &    \ldots &    \ldots &     \ldots \\
P'(\overline{\rho})\xi_j& 0 &\ldots &0\\
\ldots &    \ldots &       \ldots &    \ldots \\
P'(\overline{\rho})\xi_n& 0 &\ldots &0
\end{array}
\right).\nonumber
\end{align}
Now, we suppose that $X$ is in the null space of $g'(\overline{U})$, that is 
\begin{align}
	g'(\overline{U})X=0  \qquad &\Longleftrightarrow \qquad
X_j=0 \quad  \textrm{ for }\,\, j=2,\ldots, n+1. \nonumber
\end{align}
So if $X$ is in the null space of $g'(\overline{U})$, then $X=(X_1,0,\ldots,0)^t$. Since in this case we have 
\begin{align}
	\lambda X= A(\overline{U}) X  \qquad  \Longleftrightarrow \qquad \lambda X_1=0, \nonumber
\end{align}
$X$ cannot be an eigenvector of $A(\overline{U})$. But this is absurd because $X$ is an eigenvector of $A(\overline{U})$, therefore $X$ is not in the null space of $g'(\overline{U})$. This proves that system (\ref{sistHyp_W}) satisfies the Shizuta-Kawashima condition.


\section{The Global Existence of Smooth Solution}

In this section, by means of the entropy method, we aim to prove the global existence of smooth solution to the complete hyperbolic-parabolic system
\begin{equation}
\label{generale_perturbato1}
\left\{
\begin{array}{l}
\partial_t \rho +\nabla \cdot v  = 0, \\\\
\partial_t v + \nabla \cdot \left(\frac{v^2}{\rho+\overline{\rho}}+P(\rho+\overline{\rho})\right)= -\alpha v +\mu(\rho +\overline{\rho})\nabla \phi,\\\\
\partial_t \phi =D \Delta \phi +a \rho -b{\phi}.
\end{array}
\right.
\end{equation}
Let us recall that ${\rho}, {\phi}:\R^n\times \R^+ \rightarrow \R^+$, $u:\R^n\times \R^+ \rightarrow \R^n$, $v:= (\rho+ \overline{\rho}) u$, and $P^\prime({\rho}+\overline{\rho})>0$. Moreover, $\overline{U}=(\overline{\rho},0, \overline{\phi})$ is a constant stationary solution to the problem, with $\overline{\phi}=\frac{a}{b}\overline{\rho}$.\\
As we have shown in previous sections, the hyperbolic part (\ref{sistHyp}) of  system (\ref{generale_perturbato1}) is endowed with the dissipative entropy (\ref{entropia}) and it satisfies the strictly entropy dissipative condition.  So, considering the entropy variable $W=\nabla \mathcal{\tilde{E}}(U)$ and, setting $\Phi(W)=(\nabla \mathcal{\tilde{E}})^{-1}(W)$ and  
\begin{align}
	A_0(W)&=(\Phi(W))^\prime, \qquad \qquad \quad A_j(W)=f_j^\prime(\Phi(W))A_0,\nonumber \\
	{G}(W)&=g(\Phi(W)), \quad \quad \quad \,{H}(W,\nabla \phi)=h(\Phi(W),\nabla \phi),\nonumber
\end{align}
we write our system as:
\begin{equation}
\label{variabili_entro}
\left\{
\begin{array}{l}
	A_0\partial_t W+\sum\limits_{j=1}^n  A_j\partial_{x_j} W={G}(W)+ {H}(W, \nabla \phi),\\\\
\partial_t \phi =D \Delta \phi +a \Phi_1(W) -b\phi.
\end{array}
\right.
\end{equation}
Let us notice that the existence of a local solution to system (\ref{variabili_entro}) is ensured by classical argument. Indeed, (\ref{variabili_entro})  is a symmetric hyperbolic-parabolic system, therefore
we know that, if initial data $W_0,\,\phi_0$ are in $H^s(\R^n)$, with $s> [n/2]+1$, then there exists a local in time solution $(W,\phi)\in C([0,T),H^s(\R^n))\times (C([0,T),H^s(\R^n))\cap L^2([0,T),H^{s+1}(\R^n)))$ for system (\ref{variabili_entro}) (Theorem 2.9, \cite{Ka}). \\ 
Then, thanks to the continuation principle (see \cite{Ma}), in order to prove that $(W,\phi)$ is global in time, it is sufficient to show uniformly (in time) estimates of the local solution. \\
We state now our result: 
\begin{theorem}\label{global}
Fix $s > [n/2]+1=:s_0$. We consider the Cauchy problem associated to system (\ref{variabili_entro}), with small initial data $W_0$ and $\phi_0$ in $H^{s}(\R^n)$. If $\left\|W_0\right\|_{H^s}$, $\left\| \phi_0\right\|_{H^{s}}$ and $\overline{\rho}$ are sufficiently small, then there exists a unique solution $(W,\phi)$ of system (\ref{variabili_entro}), such that
\begin{eqnarray}
	W \in C([0,\infty),H^s(\R^n)), \qquad 	\phi \in C([0,\infty),H^{s}(\R^n))\cap L^2([0,\infty),H^{s+1}(\R^n)), \nonumber
\end{eqnarray}
and for each $t>0$
\begin{eqnarray}
&&	\left\|W(t)\right\|_{H^s}^2+\int_0^t\left\|W_2(\tau)\right\|_{H^s}^2 d \tau +\int_0^t \left\|\nabla W(\tau)\right\|^2_{H^{s-1}}d\tau \le C \left\|W_0 \right\|_{H^s}^2,
	 \nonumber\\ \nonumber\\
&&	 \left\|\phi(t)\right\|_{H^s}^2+\int_0^t \left\|\nabla \phi(\tau)\right\|^2_{H^s}d\tau \le C\left\|\phi_0 \right\|_{H^s}^2, \nonumber
\end{eqnarray}
where $C=C(\left\|W_0\right\|_{H^s}, \left\|\phi_0\right\|_{H^{s}},\overline{\rho})$. 
\end{theorem}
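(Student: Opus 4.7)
The strategy is the standard continuation argument. Since Theorem 2.9 of \cite{Ka} already furnishes a local solution in the stated regularity class, it suffices to prove a uniform a priori bound on a suitable energy functional
\[
\mathcal{N}(t)^2 := \sup_{0\le\tau\le t}\bigl(\|W(\tau)\|_{H^s}^2+\|\phi(\tau)\|_{H^s}^2\bigr) + \int_0^t\bigl(\|W_2(\tau)\|_{H^s}^2+\|\nabla W(\tau)\|_{H^{s-1}}^2+\|\nabla\phi(\tau)\|_{H^s}^2\bigr)d\tau.
\]
Under the bootstrap assumption $\mathcal{N}(t)\le\delta$ and $\|W_0\|_{H^s}+\|\phi_0\|_{H^s}+\overline{\rho}\le\delta$, the goal is the inequality $\mathcal{N}(t)^2\le C(\|W_0\|_{H^s}^2+\|\phi_0\|_{H^s}^2)+C(\overline{\rho}+\delta)\,\mathcal{N}(t)^2$, which closes by continuity once $\delta$ is small.

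For the hyperbolic block I would follow the scheme of Hanouzet-Natalini \cite{HaNa} and Yong \cite{Yo}. Testing the symmetric system with $\partial^\alpha W$ for $|\alpha|\le s$ and exploiting the strictly entropy dissipative structure established in Section 2.1, one gets $\|W(t)\|_{H^s}^2+c_1\int_0^t\|W_2\|_{H^s}^2\,d\tau\le\|W_0\|_{H^s}^2+\text{(commutators)}+\text{(coupling)}$, where the commutators are controlled by $C\mathcal{N}(t)^3$ via Moser/Kato-Ponce inequalities and the Sobolev $L^\infty$-smallness of $W$. Dissipation in the conserved component $W_1$ is recovered through the Shizuta-Kawashima condition verified in Section 2.2 by means of a compensating functional whose time derivative dominates $\|\nabla W\|_{H^{s-1}}^2$ modulo $\|W_2\|_{H^s}^2$, producing the $\int\|\nabla W\|_{H^{s-1}}^2$ contribution in the statement. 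For the parabolic equation the standard $H^s$ energy estimate gives
\[
\|\phi(t)\|_{H^s}^2+2D\int_0^t\|\nabla\phi\|_{H^s}^2\,d\tau+2b\int_0^t\|\phi\|_{H^s}^2\,d\tau\le\|\phi_0\|_{H^s}^2+2a\int_0^t(\Phi_1(W),\phi)_{H^s}\,d\tau,
\]
and the source is absorbed via Young's inequality by the dissipations on the left plus a contribution bounded by $C\mathcal{N}(t)^2$.

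The main obstacle is the chemotactic coupling $H(W,\nabla\phi)=(0,\mu(\rho+\overline{\rho})\nabla\phi)$: its linearization $\mu\overline{\rho}\,\nabla\phi$ is a first-order forcing that does \emph{not} vanish at the equilibrium and therefore lies outside the classical dissipative framework. The decisive feature is that $H$ acts only on the non-conserved block, so when it couples with $W$ in the energy identity it contributes a term of the form $\int_0^t\!\int(\rho+\overline{\rho})\,W_2\cdot\nabla\phi\,dx\,d\tau$. Cauchy-Schwarz in space and time bound it by
\[
C\bigl(\overline{\rho}+\mathcal{N}(t)\bigr)\Bigl(\int_0^t\|W_2\|_{H^s}^2\,d\tau\Bigr)^{1/2}\Bigl(\int_0^t\|\nabla\phi\|_{H^s}^2\,d\tau\Bigr)^{1/2}\le C(\overline{\rho}+\delta)\,\mathcal{N}(t)^2,
\]
which is absorbed by the left-hand side once $\overline{\rho}$ and $\delta$ are small enough. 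The $H^s$-level version of this estimate requires Kato-Ponce control of the commutators $[\partial^\alpha,(\rho+\overline{\rho})]\nabla\phi$, and the smallness of $\overline{\rho}$ enters as a genuine hypothesis because it plays the role of a small coupling constant in the linear part of the source. This synergy between the $W_2$-dissipation of the hyperbolic block and the $\nabla\phi$-regularization of the parabolic block, rather than either ingredient in isolation, is what makes the ``separate treatment'' strategy succeed where the uniform Kawashima-Shizuta framework breaks down.
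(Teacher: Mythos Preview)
Your proposal is correct and follows essentially the same route as the paper: local existence plus a continuation argument closed by combining the entropy--dissipation estimate for $W_2$, the Shizuta--Kawashima compensating functional for $\nabla W$, and the parabolic $H^s$ energy estimate for $\phi$, with the chemotactic coupling $H=(0,\mu(\rho+\overline\rho)\nabla\phi)$ absorbed precisely because it hits only the dissipative block and can therefore be paired against both $\int_0^t\|W_2\|_{H^s}^2$ and $\int_0^t\|\nabla\phi\|_{H^s}^2$. The only cosmetic difference is that the paper keeps the $\phi$-estimate subordinate to the $W$-functional $N_s$ (bounding $\int_0^t\|\nabla\phi\|_{H^s}^2$ by $\|\phi_0\|_{H^s}^2+cN_s^2$ using only $|\alpha|\ge 1$, so that the parabolic source involves $\|\nabla\rho\|_{H^{s-1}}$ rather than the non-time-integrable $\|\rho\|_{L^2}$), whereas you fold $\phi$ directly into a single functional $\mathcal{N}$; your version works too, but be careful that at the $L^2$ level of $\phi$ the Young-split source produces $\int_0^t\|\rho\|_{L^2}^2$, which is not in $\mathcal{N}^2$---this is harmless since only $\int_0^t\|\nabla\phi\|_{H^s}^2$ is actually needed downstream, and that is already delivered by the $|\alpha|\ge1$ levels.
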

In order to prove this theorem, we firstly show the validity of some energy estimates for functions $W$ and $\phi$.
\begin{rmk}
This result holds for perturbation of small constant (non null) states. Moreover in the isothermal case, i.e. $P(\rho)=\rho$, it holds also for perturbation of zero state in the one dimensional case. It can be proved using the result of \cite{MaNa} that ensures the existence of a dissipative entropy for $2 \times 2 $ hyperbolic system, under suitable assumptions \cite{DiSe2}. \\
\end{rmk}
\begin{rmk}
	Concerning the constants, they all have been denoted by the letter $c$. Thus, $c$ may stand for numbers that are different from line to line of the text. Only when we intend to explicitly indicate the dependence of $c$ on some parameters, or to avoid confusions, we have used some other notations for the constants.
\end{rmk}


\subsection{Energy Estimates for $\phi$}\label{Est_phi}
We consider the local (in time) solution $(W,\phi)\in C([0,t),H^s(\R^n))\times (C([0,t),H^s(\R^n))\cap L^2([0,t),H^{s+1}(\R^n)))$, where $t \in [0,T)$.\\
First of all, we consider the parabolic equation
\begin{align}
	\partial_t \phi =D \Delta \phi +a \rho -b{\phi}. \label{parEq}
\end{align}
Applying the spatial derivative of order $\alpha$, with $\alpha=(\alpha_1,...,\alpha_n)$ and $0 \le | \alpha | \le s$, 
and multiplying by $\partial^\alpha_x \phi$, we get 
\begin{align}
	\partial_t \left(\frac{1}{2}(\partial^\alpha_x \phi)^2\right) &+D \sum_{j=1}^n \left(\partial_{x_1}^{\alpha_1} \partial_{x_2}^{\alpha_2}... \partial_{x_j}^{\alpha_j+1}...\partial^{\alpha_n}_{x_n} \phi\right)^2 \nonumber \\
	&=D \sum_{j=1}^n \partial_{x_j} \left(\partial_{x_1}^{\alpha_1} \partial_{x_2}^{\alpha_2}... \partial_{x_j}^{\alpha_j+1}...\partial^{\alpha_n}_{x_n} \phi \, \partial_x^\alpha \phi\right) +a \partial_x^\alpha \rho \partial^\alpha_x \phi -b{(\partial_x^\alpha \phi)^2}. \nonumber 
\end{align}
Then, integrating with respect to $x$ and $t$, we have
\begin{align}
\frac{1}{2}	\int 	(\partial^\alpha_x \phi)^2 dx +&  D \int_0^t \int \left( \sum_{j=1}^n \left(\partial_{x_1}^{\alpha_1} \partial_{x_2}^{\alpha_2}... \partial_{x_j}^{\alpha_j+1}...\partial^{\alpha_n}_{x_n} \phi\right)^2 \right)  d\tau dx  \nonumber \\
\le &\frac{1}{2}\int 	(\partial^\alpha_x \phi_0)^2 dx + \frac{a}{2\epsilon}\int _0^t\int  ( \partial^\alpha_x \rho)^2 dx d\tau \nonumber \\
&	+\left( \frac{a \epsilon }{2}-b\right)\int _0^t\int (\partial^\alpha_x \phi)^2 dx d\tau. \nonumber
\end{align}
Now, let us introduce the generic functional
\begin{align}
\qquad\quad\qquad	N_l^2(t):= \sup_{0 \le \tau \le t} \left\|W(\tau)\right\|_{H^l}^2 + \int_0^t \left\|W_2(\tau)\right\|_{H^l}^2 d\tau + \int_0^t \left\|\nabla W(\tau)\right\|^2_{H^{l-1}} d \tau, \nonumber
\end{align}
for $l=1,...,s$, and
\begin{align}
	N_0^2(t):= \sup_{0 \le \tau \le t} \left\|W(\tau)\right\|_{L^2}^2 + \int_0^t \left\|W_2(\tau)\right\|_{L^2}^2 d\tau. \nonumber
\end{align}
Therefore, summing up the estimate of $\partial^\alpha_x \phi$ for $\alpha$ such that $|\alpha| \in [1,s]$ and assuming  $\epsilon$ sufficiently small, we obtain the $s$-order estimate for function $\phi$:
\begin{align}
\left\| \phi \right\|_{H^s}^2 + c \int_0^t \left\| \nabla \phi\right\|_{H^{s}}^2 d\tau  + c \int_0^t \left\| \nabla \phi\right\|_{H^{s-1}}^2 d\tau  
	\le \left\|\phi_0\right\|_{H^s}^2 +c N_s^2(t).\label{phix}
\end{align}
\begin{rmk}\label{stima_phix}
	Let us notice that the previous inequality allows us to estimate the integral in time of $\,\,\left\| \nabla \phi\right\|_{H^{s}}$ simply using the functional $N_s(t)$, which involves the derivatives up to the order $s$ of $W$.
\end{rmk}

Now, the function $\phi$ is a solution of the parabolic equation (\ref{parEq}), therefore, using the Duhamel's formula, it could be written as
\begin{eqnarray}
	\phi(x,t)= e^{-bt} \Gamma^p(x,t) * \phi_0(x) + \int_0^t e^{-b(t-\tau)} \Gamma^p(x,t- \tau) * \rho (x,t) d\tau,\nonumber
\end{eqnarray}
where $\Gamma^p$ is the heat kernel. Consequently, we obtain 
\begin{align}
\left\|\nabla \phi(x,t) \right\|_{L^\infty} 
 &\le \sum_{j=1}^n \left[ c e^{-bt} \left\|\partial_{x_j} \phi_0\right\|_{L^\infty} +  \sup_{0 \le \tau \le t} \left\|\rho(\tau)\right\|_{L^\infty} \int_0^t e^{-b(t-\tau)} (t-\tau)^{-1/2}d \tau\right] \nonumber \\
 &\le c e^{-bt} \left(\left\|\phi_0\right\|_{H^{s+1}} +  N_s(t)\right).\label{nabla_phi_Linfty}
\end{align}


\subsection{The zero-order Energy Estimate}

Now, we want to estimate the $L^2$-norm of the function $W$. To this end, let us rewrite the first two equations of system (\ref{generale_perturbato1}) in the form:
\begin{equation}\label{generale}
\partial_t U +\sum_{j=1}^n \partial_{x_j} f_j(U+ \overline{U})=g(U+ \overline{U})+h(U+ \overline{U},\nabla \phi).
\end{equation}
Multiplying the previous system by $\nabla  \mathcal{\tilde{E}}(U)= \nabla \mathcal{E}(U+ \overline{U})-\nabla \mathcal{E}(\overline{U}) $, we have
\begin{align}
	\partial_t \mathcal{\tilde{E}}(U)+ \sum_{j=1}^n \partial_{x_j} \tilde{q}_j(U)=\nabla \mathcal{\tilde{E}}(U)\cdot g(U+\overline{U}) +\nabla \tilde{\EE}(U)\cdot h(U+\overline{U},\nabla \phi), \nonumber
\end{align}
where $\tilde{q}_j$ ($j=1,...,n$) are the entropy-fluxes associated to the function $\mathcal{\tilde{E}}$.\\
Let us observe that, thanks to definitions of the entropy $\mathcal{\tilde{E}}$ and the variable $W=\nabla  \mathcal{\tilde{E}}(U)$, there exist two constants $\delta_0$ and $c$ such that 
$$
\frac{1}{c}|W|^2\leq \tilde{\EE}(U)\leq c|W|^2,
$$
for $|W| \le \delta_0$. Moreover, as proved in Section \ref{LocStrDiss}, the system (\ref{generale}) satisfies the strictly dissipative condition, therefore there exists a constant $c$ such that
$$
-(W\cdot {G}(W))\geq c |W_2|^2.
$$
Let us integrate the previous system, with respect to space variable $x$, so we get:
$$
\frac{d}{dt}\int \tilde{{\EE}}(U)dx =\int\nabla \tilde{\EE}(U)\cdot g(U+\overline{U})dx+\int\nabla \tilde{\EE}(U)\cdot h(U+\overline{U}, \nabla \phi )dx,
$$
which yields
$$
\|W(t)\|^2_{L^2}+c\int_0^t \|W_2(\tau)\|_{L^2}^2d\tau \leq \|W_0\|^2_{L^2}+c \int_0^t\int \nabla \EE(U+\overline{U})\cdot h(U+\overline{U}, \nabla \phi) dxd\tau, 
$$
for all $|W(x,t)| \le \delta_0$ where $(x,t) \in \R^n \times(0,T)$.\\
Now, let us observe that, thanks to the definition of function $h(U+\overline{U}, \nabla \phi)$, the last integral can be estimate as follows
\begin{align*}
\int_0^t\int\nabla \tilde{\EE}(U) \cdot h(U+\overline{U}, \nabla \phi) dxd\tau =&\int_0^t\int W_2 \cdot \mu( \rho+\overline{\rho}) \nabla \phi dxd\tau\nonumber \\
\leq& \int_0^t \mu \|W_2( \tau)\|_{L^2}\|\rho( \tau)\|_{L^\infty}\|\nabla \phi( \tau)\|_{L^2} d\tau \nonumber \\
&+ \int_0^t \mu\overline{\rho} \|W_2( \tau)\|_{L^2}\|\nabla \phi( \tau)\|_{L^2} d\tau \\
\leq& c \sup_{ \tau \in (0,t)} \|\rho( \tau)\|_{L^\infty}  \int_0^t \left(\|W_2( \tau)\|_{L^2}^2 + \|\nabla \phi( \tau)\|^2_{L^2} \right)d\tau\\
&+ c \overline{\rho}\int_0^t\left(\|W_2( \tau)\|^2_{L^2}+ \|\nabla \phi( \tau)\|_{L^2}^2 \right) d\tau\\
\leq&  c N_1(t) \left[ \left\| \phi_0\right\|_{L^2}^2 + N_1^2(t) \right] + c \overline{\rho}\left[\left\| \phi_0 \right\|_{L^2}^2 + N_1^2(t) \right],
\end{align*}
where, in the last inequality, we used the energy estimate of the function $\phi$. \\
In conclusion, the zero order estimate of function $W$ is given by 
\begin{align}
\|W(t)\|^2_{L^2}+\int_0^t\|W_2(\tau)\|^2_{L^2}d\tau \leq& N_0^2(0)+ C(\left\| \phi_0\right\|_{L^2}) N_1(t) \nonumber \\
&+ C(\overline{\rho}) N_1^2(t) +C N_1^3(t)+ C(\left\|\phi_0 \right\|_{L^2},\overline{\rho}). \nonumber
\end{align}


\subsection{The $s$-order Energy Estimate for function $W$}

To prove energy estimates of the function $\partial_x^s W$, it is necessary to use some inequalities based on the Sobolev embedding theorem. Here we just state the following lemma, whose proof can be found in \cite{Yo}.
\begin{lemma}\label{cal}
We take $s, \, s_1$ and $s_2$ three non-negative integers and $s_0:= [n/2]+1$. Then
\begin{description}
	\item[(i)] if $s_3 = \min \left\{s_1, s_2, s_1 + s_2 - s_0\right\} \ge  0$, then $H^{s_1}H^{s_2} \subset H^{s_3} $\\
	          (the inclusion symbol $\subset$ denotes the continuous embedding);
	\item[(ii)] if $s > s_0 $ and  $A',\, U \in H^{s-1}$, then for all multi-indices $\alpha$ with $0 \le | \alpha | \le s$, the commutator $[\partial_x^\alpha, A]U:= \partial_x^\alpha(AU)-A \partial_x^\alpha U \, \in L^2$ and
\begin{align}
	\left\|[\partial_x^\alpha, A]U\right\|_{L^2} \le C_s \left\|A'\right\|_{H^{s-1}} \left\|U\right\|_{H^{|\alpha|-1}}; \nonumber
\end{align}
	\item[(iii)] if $s \ge s_0$, $V \in H^s$ with values in $\Omega$, and $A \in {C}^s (\Omega)$ with $A(0) = 0$, then $A(V (\cdot))\in H^s$ and
\begin{align}
	\left\|A(V(\cdot))\right\|_{H^s} \le C_s |A|_{s} \left\|V\right\|_{H^s} (1+\left\|V\right\|_{H^s}^{s-1}). \nonumber
\end{align}
\end{description}
Here, $C_s$ is a constant depending only on $s$ and $n$, and $$|A|_s :=\sup\limits_{U \in \Omega, 1 \le | \alpha | \le s} |\partial_U^\alpha A(U)|.$$
\end{lemma}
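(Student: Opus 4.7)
These three assertions are standard Moser-type Sobolev calculus inequalities, and my plan is to prove them in the stated order so that (ii) and (iii) reduce to (i) via combinatorial expansions. For (i) I would fix a multi-index $\alpha$ with $|\alpha|\le s_3$ and apply the Leibniz rule to $\partial_x^\alpha(uv)$. Each summand $\partial_x^\beta u\cdot\partial_x^{\alpha-\beta}v$ is estimated in $L^2$ by H\"older with conjugate exponents $1/p+1/q=1/2$, chosen so that the Sobolev embeddings $H^{s_1-|\beta|}\hookrightarrow L^p$ and $H^{s_2-|\alpha-\beta|}\hookrightarrow L^q$ both apply, with the $L^\infty$ endpoint supplied by $H^{s_0}\hookrightarrow L^\infty$. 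The defining condition $s_3\le\min(s_1,s_2,s_1+s_2-s_0)$ is exactly what guarantees that, for every splitting of $\alpha$, at least one factor can be placed in $L^\infty$ while the other absorbs the remaining derivatives in $L^2$; summing over $\beta$ yields the claimed product law.

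For (ii) I would expand the commutator by Leibniz,
\begin{equation*}
[\partial_x^\alpha,A]U=\sum_{0<\beta\le\alpha}\binom{\alpha}{\beta}\partial_x^\beta A\cdot\partial_x^{\alpha-\beta}U,
\end{equation*}
and since $|\beta|\ge 1$ rewrite $\partial_x^\beta A=\partial_x^{\beta-e_j}(\partial_{x_j}A)\in H^{s-|\beta|}$, with norm controlled by $\|A'\|_{H^{s-1}}$, while $\partial_x^{\alpha-\beta}U\in H^{|\beta|-1}$ is controlled by $\|U\|_{H^{|\alpha|-1}}$. The hypothesis $s>s_0$ gives $(s-|\beta|)+(|\beta|-1)-s_0\ge 0$, so Part (i) with target index $s_3=0$ supplies the needed $L^2$ bound on each summand and a finite sum over the $\beta$'s concludes (ii).

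For (iii) I would use the Fa\`a di Bruno formula: for $|\alpha|\le s$,
\begin{equation*}
\partial_x^\alpha A(V)=\sum_{k=1}^{|\alpha|}\sum_{\substack{\alpha_1+\cdots+\alpha_k=\alpha\\ |\alpha_i|\ge 1}}c_{\alpha,k}\,A^{(k)}(V)\prod_{i=1}^k\partial_x^{\alpha_i}V.
\end{equation*}
Because $V\in H^s\hookrightarrow L^\infty$, the pointwise bound $|A^{(k)}(V)|\le|A|_s$ holds, and Gagliardo--Nirenberg gives $\|\partial_x^{\alpha_i}V\|_{L^{p_i}}\lesssim\|V\|_{L^\infty}^{1-|\alpha_i|/s}\|V\|_{H^s}^{|\alpha_i|/s}$ with $\sum_i p_i^{-1}=|\alpha|/(2s)=1/2$ for the top-order case $|\alpha|=s$. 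Each product then contributes $C|A|_s\|V\|_{L^\infty}^{k-1}\|V\|_{H^s}$ in $L^2$, and summing over $k\le s$ together with $\|V\|_{L^\infty}\lesssim\|V\|_{H^s}$ produces the polynomial factor $1+\|V\|_{H^s}^{s-1}$; the hypothesis $A(0)=0$ handles the $|\alpha|=0$ piece via $A(V)=\int_0^1 A'(tV)V\,dt$, which yields $\|A(V)\|_{L^2}\lesssim|A|_s\|V\|_{L^2}$. The delicate step of the whole lemma is here: keeping the Gagliardo--Nirenberg exponents $p_i$ consistent with $\sum 1/p_i=1/2$ across the combinatorially many partitions, and tracking the polynomial growth uniformly in $|\alpha|$. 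Once this is done, (i) is essentially case-by-case bookkeeping of H\"older-Sobolev exponents, and (ii) follows mechanically.
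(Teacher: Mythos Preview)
The paper does not prove this lemma at all: it explicitly states ``Here we just state the following lemma, whose proof can be found in \cite{Yo}'' and defers entirely to Yong's paper. Your outline---Leibniz expansion with H\"older--Sobolev bookkeeping for (i), commutator expansion reducing to (i) for (ii), and Fa\`a di Bruno with Gagliardo--Nirenberg interpolation for (iii)---is exactly the standard route to these Moser-type estimates and is essentially what one finds in the cited reference and in classical sources (Majda, Taylor, Kato). Your sketch is correct; the only point worth polishing is in (iii) for sub-top-order $|\alpha|<s$, where $\sum_i |\alpha_i|/s<1$ so the exponents do not close to $1/2$ on their own: one simply places the residual weight in $L^\infty$ via $H^{s_0}\hookrightarrow L^\infty$, which the hypothesis $s\ge s_0$ permits. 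Since the paper offers no argument of its own, there is nothing further to compare.
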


Now we estimate the $L^2$-norm of the $s$-order derivative of the local function $W$. To this end, we consider the system  
\begin{equation}\label{A0W}
\partial_t W+\sum^{n}_{j=1} \widetilde{A}_j \partial_{x_j}  W=A_0^{-1}{G}(W)+A_0^{-1}{H}(W, \nabla \phi), 
\end{equation}
where $\widetilde{A}_j := A_0^{-1}A_j$. Applying the derivative $\partial^\alpha_x$, where $1 \le |\alpha| \le s$, we get
\begin{eqnarray}
 \partial_x^\alpha \partial_t W+ \sum^n_{j=1} \widetilde{A}_j \partial_x^\alpha \partial_{x_j} W =& A_0^{-1} \partial_x^\alpha G + [\partial_x^\alpha, A_0^{-1}]G + A_0^{-1} \partial_x^\alpha H \nonumber \\
 &+ [\partial_x^\alpha, A_0^{-1}]H+ \sum^n_{j=1} [\widetilde{A}_j, \partial_x^\alpha]  \partial_{x_j} W,\nonumber
\end{eqnarray}
where $[a,b]c:=a(bc)-b(ac)$.\\
If we multiply this equation by $(\partial_x^\alpha W)^t A_0$, we have
\begin{align}
 (\partial_x^\alpha W)^t A_0 \partial_x^\alpha \partial_t W&+ \sum^n_{j=1} (\partial_x^\alpha W)^t A_j \partial_x^\alpha \partial_{x_j} W = (\partial_x^\alpha W)^t \partial_x^\alpha G + (\partial_x^\alpha W)^t A_0[\partial_x^\alpha, A_0^{-1}]G \nonumber \\
 +(\partial_x^\alpha W)^t \partial_x^\alpha H &+ (\partial_x^\alpha W)^t A_0[\partial_x^\alpha, A_0^{-1}]H+ \sum^n_{j=1} (\partial_x^\alpha W)^t A_0 [\widetilde{A}_j, \partial_x^\alpha]  \partial_{x_j} W.\label{m1.6}
\end{align}
Thanks to the symmetry of $A_0$ and $A_j$, we deduce the following equalities
\begin{align}
  (\partial_x^\alpha W)^t A_0 \partial_x^\alpha \partial_t W=& \frac{1}{2} \partial_t \left((\partial_x^\alpha W)^t A_0 \partial_x^\alpha W	\right)- \frac{1}{2} (\partial_x^\alpha W)^t \left(\partial_t A_0 \partial_x^\alpha W\right), \nonumber\\
  \sum^n_{j=1} (\partial_x^\alpha W)^t A_j  \partial_x^\alpha \partial_{x_j} W=& \frac{1}{2} \sum^n_{j=1} \partial_{x_j} \left((\partial_x^\alpha W)^t A_j \partial_x^\alpha W\right)-\frac{1}{2} \sum^n_{j=1} (\partial_x^\alpha W)^t \partial_{x_j} A_j \partial_x^\alpha W.\nonumber 
\end{align}

Let us observe that, thanks to the strictly dissipative condition, there exists a positive definite matrix $B$ such that 
\begin{align}
	(\partial_x^\alpha W)^t \partial_x^\alpha G=
	- ( \partial_x^\alpha W_2)^t \partial_x^\alpha (BW_2)= -( \partial_x^\alpha W_2)^t B ( \partial_x^\alpha W_2) + ( \partial_x^\alpha W_2)^t [B, \partial_x^\alpha]W_2. \nonumber
\end{align}
Substituting these equalities in (\ref{m1.6}) and integrating with respect to the space variable, we obtain
\begin{align}\label{m1.7}
&\frac{1}{2} \frac{d}{dt} \int (\partial_x^\alpha W)^t A_0 \partial_x^\alpha  W dx +\int ( \partial_x^\alpha  W_2)^t B \partial_x^\alpha  W_2 dx= \int ( \partial_x^\alpha  W_2)^t [B, \partial_x^\alpha]W_2 dx \nonumber \\
& \qquad +\int (\partial_x^\alpha W)^t \left(A_0[\partial_x^\alpha, A_0^{-1}]G +\sum^n_{j=1} A_0 [\widetilde{A}_j, \partial_x^\alpha]  \partial_{x_j} W+ A_0[\partial_x^\alpha, A_0^{-1}]H\right) dx \nonumber \\
& \qquad + \frac{1}{2}\int (\partial_x^\alpha W)^t \left(  \partial_t A_0 + \sum^n_{j=1}  \partial_{x_j} A_j \right)\partial_x^\alpha W dx+ \int (\partial_x^\alpha W)^t \partial_x^\alpha H dx.
\end{align}

Let us analyze these integrals separately. Some of them can estimate by classical arguments, following the approach of \cite{HaNa,Yo}. \\
Above all $A_0$ and $B$ are positive definite, so there exist two constants such that
\begin{align}\label{m1.8}
\begin{array}{c}
	(\partial_x^\alpha W)^t A_0 \partial_x^\alpha W \ge c |\partial_x^\alpha W|^2, \qquad \qquad ( \partial_x^\alpha  W_2)^t B \partial_x^\alpha  W_2 \ge c |\partial_x^\alpha  W_2|^2.
\end{array} 
\end{align}
These inequalities allow us to estimate the left-hand side of (\ref{m1.7}).\\
Now, we can estimate the time integral of the first term on the right-hand side of (\ref{m1.7}) using Lemma (\ref{cal}). Indeed, thanks to the condition (ii) and the regularity of functions, we have
\begin{align}
\int_0^t	\int \left|( \partial_x^\alpha  W_2)^t [B, \partial_x^\alpha]W_2\right| dx d \tau
	\le c \int_0^t  \left\|B'\right\|_{H^{s-1}} \left(\left\|\partial_x^\alpha  W_2\right\|_{L^2}^2 + \left\|W_2\right\|_{H^{|\alpha|-1}}^2\right) d \tau.  \label{m1.9c}
\end{align}
Then, we consider the second and the third integral on the right-hand side. We know that $\partial_x^\alpha W \in H^{s-| \alpha|}$ and $A_0 \in H^s$, so $s_3:=\min \left\{s, s-|\alpha|, 2s- |\alpha|-s_0\right\}$ is a positive constant. Therefore, using the condition (i) of Lemma (\ref{cal}), we obtain that $A_0 \partial_x^\alpha W \in L^2$ and $\left\|A_0 \partial_x^\alpha W \right\|_{L^2} \le c \left\|\partial_x^\alpha W\right\|_{H^{s-|\alpha|}} \left\|A_0\right\|_{H^s}$. Then, using again condition (ii) of the same lemma and the regularity of functions, we deduce that $[\partial_x^\alpha, A_0]G \in L^2$ and $[\partial_x^\alpha, A_j] \partial_{x_j} W \in L^2$, for each $j=1,...,n$. As a consequence of these observations, we can calculate
\begin{align}
&\int_0^t \int \left|(\partial_x^\alpha W)^t  A_0[\partial_x^\alpha, A_0^{-1}]G \right|dx d\tau \nonumber \\
 &\qquad \le   c\int_0^t \left[\left\|\partial_x^\alpha W\right\|_{H^{s-|\alpha|}}^2 \left\|A_0\right\|_{H^{s}}^2 + \left\|(A_0^{-1})'\right\|_{H^{s-1}}^2 \left\|W_2\right\|^2_{H^{|\alpha|-1}}\right]d\tau, \label{m1.9a}
\end{align}
and
\begin{align}
&	\int_0^t \int \left|(\partial_x^\alpha W)^t A_0  \sum^n_{j=1} [\widetilde{A}_j, \partial_x^\alpha]  \partial_{x_j} W\right| dx d\tau \nonumber \\
 &\qquad
	 \le  c \int_0^t  \left\|A_0 \right\|_{H^s} \sum^n_{j=1} \left\|\widetilde{A}'_j\right\|_{H^{s-1}} \left(\left\|\partial_x^\alpha W\right\|_{H^{s-|\alpha|}}^2+\left\|\nabla W\right\|_{H^{| \alpha|-1}}^2\right) d\tau. \label{m1.9b}
\end{align}
Moreover, in the same way, we get
\begin{align}
	\int \left|(\partial_x^\alpha W)^t  A_0[\partial_x^\alpha, A_0^{-1}]H \right|dx 
  \le   C \left\|\partial_x^\alpha W\right\|_{H^{s-| \alpha |}} \left\|A_0\right\|_{H^s} \left\|(A_0^{-1})'\right\|_{H^{s-1}} \left\|H\right\|_{H^{|\alpha|-1}}. \nonumber
\end{align}
Let us point out that classical arguments are not a sucessful strategy to estimate the r.h.s. of the previous inequality. Since the last term causes the failure of standard approaches, our aim is to show an effective technique to estimate it.\\
So, we focus our attention on this term and we get the following inequality:
\begin{align}
	\left\|H(U+\overline{U},\nabla \phi)\right\|_{H^{|\alpha|-1}} 
	\le\mu \left[\left\|\rho \right\|_{L^\infty}  \left\|\nabla \phi \right\|_{H^{|\alpha|-1}}+ \left\|\rho \right\|_{H^{|\alpha|-1}}  \left\|\nabla \phi \right\|_{L^\infty} + \overline{\rho} \left\|\nabla \phi \right\|_{H^{|\alpha|-1}}\right]. \nonumber 
\end{align}
Now, substituting this estimate in the previous one and integrating with respect to $t$, we get 
\begin{align}
\int_0^t 	\int & \left|(\partial_x^\alpha W)^t  A_0[\partial_x^\alpha, A_0^{-1}]H  \right|dx d \tau \nonumber\\
\le&   c A \bigg[\sup_{0 \le \tau \le t}\left\|\rho \right\|_{L^\infty}  \int_0^t  \left(\left\|\partial_x^\alpha W\right\|_{H^{s-| \alpha |}}^2+  \left\|\nabla \phi \right\|^2_{H^{|\alpha|-1}}\right)d \tau \nonumber \\
& +\sup_{0 \le \tau \le t}   \left\|\rho(\tau) \right\|_{H^{|\alpha|-1}} \int_0^t \left(\left\|\partial_x^\alpha W\right\|^2_{H^{s-| \alpha |}}+ \left\|\nabla \phi \right\|^2_{L^\infty}\right) d\tau  \nonumber \\
& +\overline{\rho}\int_0^t  \left(\left\|\partial_x^\alpha W\right\|^2_{H^{s-| \alpha |}} + \left\|\nabla \phi \right\|^2_{H^{|\alpha|-1}}  \right)\bigg] \nonumber \\
\le&   c A \bigg[N_s(t) \left(N_s^2(t)+  \left\|\phi_0 \right\|^2_{H^{|\alpha|}}+ N_\alpha^2(t)\right) + N_{\alpha-1}(N_s^2(t)+\left\|\phi_0\right\|^2_{H^{s}}+N_s^2(t)) \nonumber \\
&+\overline{\rho} \left(N_s^2(t)+ \left\| \phi_0 \right\|^2_{H^{|\alpha|}}+ N_\alpha^2(t)  \right)\bigg] \nonumber \\
\le&   c A \left(N_s^3(t) +  c(\left\|\phi_0 \right\|_{H^{s}}) N_s(t)+ c(\overline{\rho})N_{s}^2(t)+ c(\left\| \phi_0 \right\|_{H^{|\alpha|}},\overline{\rho})\right), \label{h_a}
\end{align}
where $A:= \sup\limits_{0 \le \tau \le t} \left\|A_0\right\|_{H^s} \left\|(A_0^{-1})'\right\|_{H^{s-1}}$.\\
Next, the last integral of inequality (\ref{m1.7}) can be studied in the following way: 
\begin{align}
	 \int (\partial_x^\alpha W)^t \partial_x^\alpha H dx \le &\mu \int | ( \partial_x^\alpha  W_2)^t \partial_x^\alpha(\rho \nabla \phi)| dx+ \mu \overline{\rho} \int | ( \partial_x^\alpha  W_2)^t \partial_x^\alpha(\nabla \phi)| dx \nonumber \\
	 \le& \mu \| \partial_x^\alpha  W_2 \|_{L^2} \left(\| \rho \|_{L^\infty}\| \nabla \phi \|_{H^\alpha} + \| \rho\|_{H^\alpha} \|\nabla \phi \|_{L^\infty}\right)\nonumber\\
	 &+ \mu \overline{\rho} \| \partial_x^\alpha  W_2 \|_{L^2} \|\nabla \phi\|_{H^\alpha}, \nonumber	  
\end{align}
which, integrating with respect to the time variable, yields
\begin{align}
 \int_0^t	\int (\partial_x^\alpha W)^t \partial_x^\alpha H dx d\tau &  \le \mu \sup_{0 \le \tau \le t}\| \rho \|_{L^\infty}  \int_0^t \left( \| \partial_x^\alpha  W_2 \|^2_{L^2} + \| \nabla \phi \|^2_{H^\alpha}\right) d \tau  \nonumber \\
 & \quad  +  \mu \sup_{0 \le \tau \le t} \| \rho\|_{H^\alpha} \int_0^t \left(  \| \partial_x^\alpha  W_2 \|^2_{L^2}+ \|\nabla \phi \|^2_{L^\infty}\right) d \tau\nonumber\\
 & \quad+  \mu \overline{\rho}  \int_0^t  \left(\| \partial_x^\alpha  W_2 \|^2_{L^2}
 + \|\nabla \phi\|^2_{H^\alpha}\right) d \tau \nonumber \\
  &  \le c N_s^3(t)+c ( \| \phi_0 \|_{H^{s}},\overline{\rho}) N_s(t) + c (\overline{\rho})  N_\alpha^2(t)+c(\left\|\phi_0\right\|_{H^\alpha},\overline{\rho}).  \label{m1.9d} 
\end{align}

\begin{rmk}
	Let us point out that, in order to estimate the second integral of (\ref{m1.9d}), it is not useful to consider $\sup\limits_{0\le \tau \le t} \left\|\nabla \phi\right\|_{L^\infty}$. Indeed, it is impossible to estimate this term by the functional $N_s$, since, as deduced by (\ref{phix}), the order of the functional should be increased up to $s+1$. While, as noticed in Remark (\ref{stima_phix}), we can control the time integral of $\, \,\left\|\nabla \phi\right\|_{L^\infty} \,$ by $\,\,N_s$, without increasing the higher order derivative. 
\end{rmk}

Now, we examine the remaining term of (\ref{m1.7}). Using (\ref{A0W}) and the definition of $\widetilde{A}_j$, we can write
\begin{align}
	\partial_t A_0+ \sum^n_{j=1}  \partial_{x_j} A_j 
	&= -A'_0 \left(\sum^n_{j=1} \widetilde{A}_j \partial_{x_j} W \right)+A'_0 \left(A_0^{-1}G\right)+A'_0 \left(A_0^{-1}H\right)+\sum^n_{j=1} A'_j \partial_{x_j} W \nonumber \\
	&= \sum^n_{j=1} A_0\left(\widetilde{A}'_j \partial_{x_j} W\right) +A'_0 \left(A_0^{-1}G\right)+A'_0 \left(A_0^{-1}H\right). \nonumber
\end{align}

From this equality, recalling that $G=(0,-BW_2)^t$, we deduce
\begin{align}
\int 	\left|(\partial_x^\alpha W)^t \left(\partial_t A_0+ \sum^n_{j=1}  \partial_{x_j} A_j\right)\partial_x^\alpha W \right| d x 
	\le&  c \overline{A} \left(\sum^n_{j=1} \left\|\partial_{x_j} W \right\|_{L^2} + \left\|W_2\right\|_{L^2}  \right) \left\|\partial_x^\alpha W\right\|_{L^2}^2 \nonumber \\
	& +c \overline{A} \left({\left\|(\rho + \overline{\rho})\nabla \phi\right\|_{L^2}} \right) \left\|\partial_x^\alpha W\right\|_{L^2}^2 \nonumber \\
\le&  c \overline{A}\left( \right.\left\|W_2\right\|_{L^2}\left\|\partial_x^\alpha W\right\|_{L^2}^2+ \left\|\nabla W\right\|_{L^2}\left\|\partial_x^\alpha W\right\|_{L^2}^2\nonumber \\
	&+ {\left\|(\rho + \overline{\rho})\nabla \phi\right\|_{L^2} \left\|\partial_x^\alpha W\right\|_{L^2}^2} \left.\right),\qquad  \label{in.1}
\end{align}
where $\overline{A}:= \sup\limits_{0 \le \tau \le t} \left\{ \left\|A_0\right\|_{H^{s}} \sum\limits_{j=1}^n \left\|\widetilde{A}'_j\right\|_{H^{s-1}}+ \left\|A'_0\right\|_{H^{s-1}}\left\|A_0^{-1}\right\|_{H^{s}} (1+ \left\|B\right\|_{H^s})\right\}$. \\
Let us analyze these terms separately. First of all, we have
\begin{align}
	\int_0^t \left\|W_2\right\|_{L^2}\left\|\partial_x^\alpha W\right\|_{L^2}^2 d\tau &\le \sup_{0 \le \tau \le t} \left\|\partial_x^\alpha W(\tau)\right\|_{L^2}  \int_0^t \left(\left\|\partial_x^\alpha W\right\|_{L^2}^2 + \left\|W_2\right\|_{L^2}^2	\right) d \tau \nonumber \\
	&\le N_\alpha (t) (N_0^2(t)+N_{\alpha-1}^2(t)), \nonumber
\end{align}
and
\begin{align}
	\int_0^t \left\|\nabla W\right\|_{L^2}\left\|\partial_x^\alpha W\right\|_{L^2}^2 d\tau
	\le N_\alpha (t) (N_1^2(t)+N_{\alpha-1}^2(t)). \nonumber
\end{align}
Now, we are interested in studying the last term of the inequality (\ref{in.1}). Let us observe that
\begin{align}
	\int_0^t \left\|(\rho+\overline{\rho})\nabla \phi\right\|_{L^2} \left\|\partial_x^\alpha W \right\|_{L^2} d \tau \le & \sup_{0 \le \tau \le t}\left\| \rho\right\|_{L^2} \int_0^t  \left(\left\|\nabla \phi \right\|^2_{L^\infty}+  \left\|\partial_x^\alpha W \right\|_{L^2}^2\right)d \tau  \nonumber \\
	&+ \sup_{0 \le \tau \le t} \left\|{\rho} (\tau) \right\|_{L^\infty} \int_0^t \left(\left\|\nabla \phi\right\|_{L^2}^2+  \left\|\partial_x^\alpha W \right\|_{L^2}^2 \right) d \tau\nonumber\\
	&+ \overline{\rho} \int_0^t \left(\left\|\nabla \phi\right\|_{L^2}^2 +  \left\|\partial_x^\alpha W \right\|_{L^2}^2 \right) d \tau \nonumber \\	
\le & c (\left\|\phi_0\right\|_{H^{s}})N_s(t)+ c(\overline{\rho}) N_\alpha^2(t) + N_s^3(t)+ c( \left\|\phi_0\right\|_{L^2}, \overline{\rho}). \label{m1.9e}
\end{align}
Finally, integrating the equation (\ref{m1.7}) with respect to the time variable and substituting in it inequalities (\ref{m1.8}), (\ref{m1.9c}), (\ref{m1.9a}), (\ref{m1.9b}), (\ref{h_a}), (\ref{m1.9d}) and (\ref{m1.9e}), we deduce
\begin{align}
\| \partial_x^\alpha W(t)\|_{L^2}^2+\int_0^t\|\partial_x^\alpha W_2(\tau)\|_{L^2}^2 d\tau 
\le& c(\left\|W_0\right\|_{H^s}, \left\|\phi_0\right\|_{H^s},\overline{\rho})+ c(\left\|\phi_0\right\|_{H^{s}},\overline{\rho}) M_s(t) N_s(t)\nonumber\\
& + c(\overline{\rho}) M_s(t) N_s^2(t) + c M_s(t) N_s^3(t), \nonumber   
\end{align}
where
\begin{align}
	M_s(t):=\sup\limits_{0 \le \tau \le t }  &\Bigg[\left\|A_0\right\|_{H^s}^2+\left\|(A_0^{-1})'\right\|_{H^{s}}^2+ \left\|A_0\right\|_{H^{s}} \sum\limits_{j=1}^n \left\|\widetilde{A}'_j\right\|_{H^{s-1}}+ \left\|A_0\right\|_{H^{s}} \left\|(A_0^{-1})'\right\|_{H^{s-1}}\nonumber \\
	&+ \left\|B'\right\|_{H^{s-1}}+ \left\|A'_0\right\|_{H^{s-1}} \left\|(A_0^{-1})\right\|_{H^{s}} (1+ \left\|B\right\|_{H^s})\nonumber \\
	&+(1+ \left\|W\right\|_{H^{s-1}}+ \left\|W\right\|_{H^{s-1}}^{s-1})^2 \left\|B\right\|^2_{H^{s-1}}+1+ \left\|W\right\|_{H^{s-1}}+ \left\|W\right\|_{H^{s-1}}^{s-1}\nonumber \\
	&+ \Bigg(\sum\limits_{j=1}^n \left\|\widetilde{A}_j(0)-\widetilde{A}_j(W)\right\|_{H^{s-1}}\Bigg)^2\Bigg].\label{costante_Ms}
\end{align}
Therefore, summing up for $1 \le |\alpha| \le s$, we deduce the following $s$-order estimate of function $W$
\begin{align}
\| W(t)\|_{H^s}^2+\int_0^t\| W_2(\tau)\|_{H^s}^2 d\tau\leq &   C(\left\|W_0\right\|_{H^s},\left\|\phi_0\right\|_{H^s},\overline{\rho})+ C( \left\|\phi_0\right\|_{H^{s}}, \overline{\rho}) M_s(t) N_s(t)\nonumber \\
&+ C(\overline{\rho}) M_s(t) N_s^2(t)+ C M_s(t) N_s^3(t).\nonumber 
\end{align}


\subsection{Proof of the Global Existence Theorem}
Now, we are finally able to prove Theorem (\ref{global}), showing the existence of a global smooth solution for system (\ref{variabili_entro}).  

\begin{proof}
Let us recall the definition of the functionals
\begin{align}
	N_l^2(t)&:= \sup_{0 \le \tau \le t} \left\|W(\tau)\right\|_{H^l}^2 + \int_0^t \left\|W_2(\tau)\right\|_{H^l}^2 d\tau + \int_0^t \left\|\nabla W(\tau)\right\|^2_{H^{l-1}}d \tau,\; \textrm{for } l=1,...,s, \nonumber\\
	N_0^2(t)&:= \sup_{0 \le \tau \le t} \left\|W(\tau)\right\|_{L^2}^2 + \int_0^t \left\|W_2(\tau)\right\|_{L^2}^2 d\tau,  \nonumber
\end{align}
and the energy estimates, obtained in previous sections, 
\begin{align}
\|W(t)\|^2_{L^2}+\int_0^t\|W_2(\tau)\|^2_{L^2}d\tau \leq &N_0^2(0)+ C(\left\| \phi_0\right\|_{L^2}) N_1(t) + C(\overline{\rho}) N_1^2(t)\nonumber\\
 &+C N_1^3(t)+ C(\left\|\phi_0 \right\|_{L^2},\overline{\rho})\label{1.N1},
\end{align}
and, for each $s\ge1$,
\begin{align}
\| W(t)\|_{H^s}^2+\int_0^t\| W_2(\tau)\|_{H^s}^2 d \tau \leq&   C(\left\|W_0\right\|_{H^s},\left\|\phi_0\right\|_{H^s},\overline{\rho})+ C(\left\|\phi_0\right\|_{H^{s}}, \overline{\rho}) M_s(t) N_s(t)\nonumber \\
&+ C(\overline{\rho}) M_s(t) N_s^2(t) 
+ C M_s(t) N_s^3(t).\label{1.N2}
\end{align}
Therefore, to obtain an estimate of the functional $N^2_s(t)$, we have to study also the term 
$$
\int_0^t\|\nabla W(\tau)\|^2_{H^{l-1}}d \tau,\qquad  \quad \textrm{for } l=1,...,s.
$$
To this end, we rewrite the first equation of system (\ref{variabili_entro}), in the following way
\begin{align}
	 \partial_t W + \sum_{j=1}^n \widetilde{A}_j(0) \partial_{x_j} W= A_0^{-1}(W) G(W) +A_0^{-1}(W) H(W, \nabla \phi)+{L}(W, \partial_x W),\nonumber
\end{align}	 
where $L:=\sum\limits^n_{j=1}\left( \widetilde{A}_j(0)-\widetilde{A}_j(W) \right)\partial_{x_j} W$. Applying the Fourier transform with respect to $x$, we obtain 
\begin{align}
	 \partial_t \widehat{W}+i \sum^n_{j=1} \xi_j \widetilde{A}_j(0) \widehat{W} =\widehat{A_0^{-1}G}+\widehat{A_0^{-1}H}+ \widehat{L}. \label{sist_fourier}
\end{align}
Let us recall that, in Section (\ref{SKcond}), we proved that the first equation of system (\ref{variabili_entro}) without the term $H(W, \nabla \phi)$ satisfies the condition (SK). As shown by Shizuta and Kawashima \cite{ShiKa}, this means that there exist a constant $c>0$ and a skew-symmetric real matrix $K=K(\xi) \in {C}^\infty (S^{n-1})$ satisfying $K(-\xi)=-K(\xi)$ and 
\begin{align}
	\frac{1}{2}\left[K(\xi)\widetilde{A}(\xi)+\left(K(\xi)\widetilde{A}(\xi)\right)^t\right] + |\xi| \textrm{diag}(0,I_{n}) \ge c |\xi| I_{n+1}, \label{m1.13}
\end{align}
for every $\xi \in S^{n-1}$, where $S^{n-1}$ is the unit sphere in $\R^n$ and 
$$	\widetilde{A}(\xi):= \sum^n_{j=1} \widetilde{A}_j(0) \xi_j, \qquad \qquad \xi \in \R^n \backslash \left\{0\right\}.$$ 
Now, if we multiply the system (\ref{sist_fourier}) by $-i\widehat{W}^t K$, then we have
\begin{eqnarray}
 -i\widehat{W}^t K \partial_t\widehat{W}+\widehat{W}^t K \sum^n_{j=1} \xi_j \widetilde{A}_j(0) \widehat{W} = -i\widehat{W}^t K (\widehat{A_0^{-1}G}+\widehat{A_0^{-1}H}+\widehat{L}).	\nonumber
\end{eqnarray}
Substituting inequality (\ref{m1.13}) 
and 
\begin{eqnarray}
	2 \textrm{Im} \widehat{W}^t K(\widehat{A_0^{-1}G}+\widehat{A_0^{-1}H}+\widehat{L}) \le c|\xi| | \widehat{W}|^2+C|	\xi|^{-1}(|\widehat{A_0^{-1}G}|^2+|\widehat{A_0^{-1}H}|^2+|\widehat{L}|^2) \nonumber
\end{eqnarray}
in the previous system, we obtain 
\begin{eqnarray}
	-i 	\partial_t \left(\widehat{W}^t K\widehat{W}\right)  +c|\xi||\widehat{W}|^2 \le 2 |\xi| |\widehat{W_2}|^2 + C |	\xi|^{-1}(|\widehat{A_0^{-1}G}|^2+|A_0^{-1}\widehat{H}|^2+|\widehat{L}|^2). \nonumber	
\end{eqnarray}
Let us multiply this last inequality by $|\xi|^{2k-1}$, with $k \ge 1$, and integrate  
over $\R^n \times [0,t]$, so we obtain  
\begin{align}
c \int_0^t \int | \xi|^{2k}|\widehat{W}|^2 d \xi d \tau  \le &2 \int_0^t \int |\xi|^{2k} |\widehat{W_2}|^2 d\xi d\tau \nonumber\\
&+C \int | \xi|^{2k-1}|\widehat{W}(\xi, t)|^2 d \xi +C \int | \xi|^{2k-1}|\widehat{W}_0|^2 d \xi  \nonumber\\
&  + C \int_0^t \int |	\xi|^{2k-2}( |\widehat{A_0^{-1}G}|^2+|\widehat{A_0^{-1}H}|^2+|\widehat{L}|^2)d\xi d\tau. \nonumber
\end{align}
Then, since $2 | \xi| \le 1+ | \xi |^2$ and $k \ge 1$, we deduce that
\begin{align}
 \int_0^t \sum_{|\alpha|=k} \left\|\partial_x^\alpha W(\tau)\right\|_{L^2}^2 d\tau  
\le& c \left[ \int_0^t \left(\sum_{|	\alpha|=k} \left\|\partial_x^\alpha  W_2(\tau) \right\|_{L^2}^2 \right)d\tau + \left\|{W}(t)\right\|_{H^k}^2 + \left\|{W}_0\right\|_{H^k}^2 \right. \nonumber\\
 &+ \int_0^t \sum_{|\alpha|=k-1}\left(\left\|\partial_x^\alpha  (A_0^{-1}G)\right\|_{L^2}^2+\left\|\partial_x^\alpha  (A_0^{-1}H)\right\|_{L^2}^2 d\tau\right),\nonumber\\
& +\left.\int_0^t \sum_{|\alpha|=k-1}\left\|\partial_x^\alpha L\right\|_{L^2}^2d \tau\right],\nonumber
\end{align}
which, summing over all $\alpha$ such that $|\alpha| \in [1,s]$, yields 
\begin{align}
 \int_0^t \left\|\nabla W(\tau) \right\|_{H^{s-1}}^2 d \tau  
\le& c\left[ \int_0^t \left\|W_2(\tau) \right\|_{H^s}^2 d \tau+ \left\|{W}(t)\right\|_{H^s}^2 + \left\|{W}_0\right\|_{H^s}^2 \right.  \nonumber\\
 &+ \left. \int_0^t \left( \left\|A_0^{-1}G\right\|_{H^{s-1}}^2+\left\|A_0^{-1}H \right\|_{H^{s-1}}^2+\left\|L \right\|_{H^{s-1}}^2 \right) d\tau\right]. \nonumber
\end{align}
Now, let us recall that $L=\sum\limits^n_{j=1}\left( \widetilde{A}_j(0)-\widetilde{A}_j(W) \right)\partial_{x_j} W$, so using condition (i) of Lemma (\ref{cal}), we get
\begin{align}
	\left\|L\right\|_{H^{s-1}} \le& c \sum^n_{j=1}\left\|\widetilde{A}_j(0)- \widetilde{A}_j(W)\right\|_{H^{s-1}} \left\|\partial_{x_j} W \right\|_{H^{s-1}}, \nonumber 
\end{align}	
then
\begin{eqnarray}
\int_0^t	\left\|L\right\|^2_{H^{s-1}} d \tau &\le&	 c M_s (t) N_{s}^2(t), \nonumber 
\end{eqnarray}	
where $M_s(t)$ is defined by (\ref{costante_Ms}).\\ 
Using again Lemma (\ref{cal}), we deduce that 
\begin{align}
\left\| A_0^{-1}G\right\|_{H^{s-1}} &\le \left\| A_0^{-1}(0)G\right\|_{H^{s-1}} + \left\|\left[ A_0^{-1}(W)- A_0^{-1}(0)\right]G\right\|_{H^{s-1}} \nonumber \\
	&\le c \left(1 + \left\| A_0^{-1}(W)- A_0^{-1}(0)\right\|_{H^{s-1}}\right)\left\|G\right\|_{H^{s-1}} \nonumber \\
		&\le c (1+\left\|W\right\|_{H^{s-1}}+\left\|W\right\|^{s-1}_{H^{s-1}})\left\|B\right\|_{H^{s-1}} \left\|W_2\right\|_{H^{s-1}}, \nonumber
\end{align}
which yields
\begin{align}
\int_0^t \left\| A_0^{-1}G\right\|^2_{H^{s-1}} d \tau \le& c M_s(t) N_{s-1}^2(t). \nonumber
\end{align}
Proceeding in the same way, we get
\begin{eqnarray}
\left\| A_0^{-1}H\right\|_{H^{s-1}} \le c (1+\left\|W\right\|_{H^{s-1}}+\left\|W\right\|^{s-1}_{H^{s-1}})\left\|H\right\|_{H^{s-1}}, \nonumber
\end{eqnarray}
so, we have 
\begin{align}
	\int_0^t \left\| A_0^{-1}H\right\|^2_{H^{s-1}} d \tau 
	 \le&  M_s(t) \int_0^t \left( \left\|\rho \right\|_{L^\infty} \left\|\nabla \phi\right\|_{H^{s-1}}+ \left\|\rho\right\|_{H^{s-1}} \left\|\nabla \phi\right\|_{L^\infty}+ \overline{\rho} \left\|\nabla \phi\right\|_{H^{s-1}}\right)^2 d\tau \nonumber \\
	 \le&  M_s(t) \bigg[ \sup_{0\le \tau \le t} \left\|\rho \right\|^2_{L^\infty}  \int_0^t \left\|\nabla \phi\right\|^2_{H^{s-1}}d\tau\nonumber \\
	 &+  \sup_{0\le \tau \le t}   \left\|\rho\right\|^2_{H^{s-1}} \int_0^t \left\|\nabla \phi\right\|^2_{L^\infty}d\tau + \overline{\rho}^2 \int_0^t \left\|\nabla \phi\right\|^2_{H^{s-1}}d\tau\nonumber \\
	 & + 2 \sup_{0\le \tau \le t} \left\|\rho \right\|_{L^\infty}  \sup_{0\le \tau \le t}\left\|\rho\right\|_{H^{s-1}} \int_0^t \left(\left\|\nabla \phi\right\|^2_{H^{s-1}} + \left\|\nabla \phi\right\|^2_{L^\infty}\right)d\tau\nonumber \\ 
& + 2 \overline{\rho} \sup_{0\le \tau \le t} \left\|\rho \right\|_{L^\infty} \int_0^t \left\|\nabla \phi\right\|^2_{H^{s-1}} d\tau \nonumber \\
&+ 2  \overline{\rho} \sup_{0\le \tau \le t}  \left\|\rho\right\|_{H^{s-1}} \int_0^t\left( \left\|\nabla \phi\right\|^2_{L^\infty}+ \left\|\nabla \phi\right\|^2_{H^{s-1}}\right) d\tau \bigg]	\nonumber \\
   \le&  M_s(t) \bigg[N_s^2(t) \left(  \left\|\phi_0\right\|^2_{H^{s}}+N_{s-1}^2(t)\right)+ N_{s}^2(t) \left(  \left\|\phi_0\right\|^2_{H^{s}}+N_{s}^2(t)\right) \nonumber \\
&+ \overline{\rho}^2 \left(  \left\|\phi_0\right\|^2_{H^{s}}+N_{s}^2(t)\right)+ 2 N_s^2(t)\left(  \left\|\phi_0\right\|^2_{H^{s}}+N_{s}^2(t)\right)\nonumber \\
&+  2 N_s^2(t) \left(  \left\|\phi_0\right\|^2_{H^{s}}+N_{s}^2(t)\right) + 2 \overline{\rho} N_s(t) \left(  \left\|\phi_0\right\|^2_{H^{s}}+N_{s}^2(t)\right)\nonumber \\ 
&+ 2  \overline{\rho} N_{s}(t) \left(  \left\|\phi_0\right\|^2_{H^{s}}+N_{s}^2(t)\right) +2  \overline{\rho} N_{s-1}(t)  \left(  \left\|\phi_0\right\|^2_{H^{s}}+N_{s}^2(t)\right)\bigg] \nonumber\\
   \le& c(\left\|\phi_0\right\|_{H^{s}},\overline{\rho}) M_s(t) N_s^2(t) + M_s(t) N_{s}^4(t) + c(  \left\|\phi_0\right\|_{H^{s}}, \overline{\rho}, M_s(t) )\nonumber \\ 
 & + c( \left\|\phi_0\right\|_{H^{s}},\overline{\rho})  M_s(t) N_s(t) + c(\overline{\rho})  M_s(t) N_{s}^3(t). \nonumber 
\end{align}
Consequently, as long as $M_s(t) \le C$, we obtain 
\begin{align}
  \int_0^t \left\|\nabla W \right\|_{H^{s-1}}^2 d \tau  
\le & c N_s^2(0)+    c(\left\|\phi_0\right\|_{H^{s}},\overline{\rho})  M_s(t) N_s(t) + c(\left\|\phi_0\right\|_{H^{s}},\overline{\rho}) M_s(t) N_s^2(t)\nonumber \\
&+   c(\overline{\rho}) M_s(t)  N_{s}^3(t)  +c M_s(t) N_{s}^4(t) + c(\left\|\phi_0\right\|_{H^{s}},  \overline{\rho}, M_s(t) ). \nonumber 
\end{align}
Combining the previous inequality with (\ref{1.N1}), (\ref{1.N2}), we get the estimate 
\begin{align}
	N_s^2(t) \le& \,C N_s^2(0) + C(\left\|\phi_0\right\|_{H^{s}},\overline{\rho}, M_s(t))+C (\left\|\phi_0\right\|_{H^{s}},\overline{\rho},M_s(t)) N_s(t)\nonumber \\ 
	&+ C(\overline{\rho},M_s(t)) N_s^2(t)+ C(\overline{\rho},M_s(t))N_s^3(t) + C( M_s(t) ) N_s^4(t). \nonumber  
\end{align}
In conclusion, choosing small initial data and small constant state, from the previous inequality we deduce the theorem, by classical arguments.\hfill
\end{proof}


\section{Asymptotic Behavior}
In this section we study the time decay properties of the global smooth solution to system (\ref{generale_perturbato1}), proceeding along the lines of \cite{BiHaNa}. Thanks to the decomposition of the Green function of the linearized problem, we aim to obtain the $H^s$ and $L^\infty$ decay estimates of the solution for the considered model.\\
To this end we rewrite system (\ref{generale_perturbato1}) in the Conservative-Dissipative form as
\begin{align}
\left\{
\begin{array}{l}
	\partial_t (U+ \overline{U})+ \sum\limits_{j=1}^n \partial_{x_j}f_j(U+\overline{U})=g(U)+ h(U+\overline{U}, \nabla \phi), \\ \\
	\partial_t \phi =D \Delta \phi +a \rho -b{\phi},
\end{array}
\right.\label{sistAs}
\end{align}
where
\begin{align*}
	U=\left( \begin{array}{c}
	\rho\\\frac{v}{\sqrt{P'(\overline{\rho})}}
\end{array}
\right),
\quad
	\overline{U}=\left( \begin{array}{c}
	\overline{\rho}\\0
\end{array}
\right),
\quad f_j(U+\overline{U})=\left(
\begin{array}{c}
\sqrt{P'(\overline{\rho})}	v_j \\ 
\sqrt{P'(\overline{\rho})} \frac{v_j v}{\rho+\overline{\rho}} + \frac{P(\rho+\overline{\rho})}{\sqrt{P'(\overline{\rho})}}e_j
\end{array}
\right), 
\end{align*}
\begin{align*}
g(U)=\left(
\begin{array}{c}
	0 \\ - \alpha v
\end{array}
\right), \quad
h(U+\overline{U},\nabla \phi)=\left(
\begin{array}{c}
	0 \\ \mu\frac{\rho+\overline{\rho}}{\sqrt{P'(\overline{\rho})}}\nabla \phi
\end{array}
\right).\nonumber
\end{align*}
Defined $\overline{f}_j(U)=f_j(U+\overline{U})-f_j(\overline{U})$ and $\overline{\mu}=\frac{\mu}{\sqrt{P'(\overline{\rho})}}$, the system can be rewritten in the following way
\begin{align}
	\partial_t U+  \sum\limits_{j=1}^n \partial_{x_j}  \left(\overline{f}_j'(\overline{U})U\right)=g({U}) + \sum_{j=1}^n\partial_{x_j} \left(\overline{f}_j'(\overline{U})U- \overline{f}_j(U)\right)+h(U+\overline{U},\nabla \phi), \label{mod4}
\end{align}
and its solution is given by
\begin{align}
U(t)=& \Gamma^h(t) *U_0+  \sum\limits_{j=1}^n  \int_0^t \partial_{x_j} \Gamma^h (t-\tau)* \left[\overline{f}_j'(\overline{U})U(\tau)-\overline{f}_j(U(\tau))\right]d\tau\nonumber \\
&+ \int_0^t \Gamma^h (t-\tau)*h(U+\overline{U},\nabla \phi)d\tau,  \label{3}	
\end{align}
where $\Gamma^h$ denotes the Green function of the linearized system
$$
\partial_t U+\sum\limits_{j=1}^n \overline{f}_j'(\overline{U})\partial_{x_j} U=g(U).
$$

Let us briefly recall the results on the Green Kernel of multidimensional dissipative hyperbolic systems obtained by Bianchini et al. in \cite{BiHaNa}. In their work the authors analyzed the behavior of the function $\Gamma^h(x,t)$ for linearized problems. It has been decomposed into two main terms: the diffusive one consisting of heat kernel and a faster term consisting of the hyperbolic part. In general, the form of the Green function is not explicit, but it is possible to deal with its Fourier transform. The separation of the Green Kernel into various parts is done at the level of a solution operator $\Gamma^h(t)$ acting on $L^1(\R^{n}) \cap L^2(\R^n)$.

They deeply described the behavior of the diffusive part, which is decomposed in four blocks, decaying with different rates. They showed that solutions
have canonical projections on two different components: the conservative part and the dissipative part. The first one, which formally corresponds to the conservative part of equations, decays in time like the heat kernel, since it corresponds to the diffusive part of the Green function. On the
other side, the dissipative part is strongly influenced by the dissipation and decays at a rate $t^{-\frac{1}{2}}$ faster than the conservative one.

They considered the Cauchy problem for the linear system in the conservative-dissipative form
\begin{equation}
\partial_t w+\sum_{j=1}^n A_j \partial_{x_j} w= Bw,\nonumber
\end{equation}
and they showed that it is possible to decompose the solution as
$$
w(t)=\Gamma^h(t)\ast w_0=K(t) w_0+\mathcal{K}(t)  w_0,
$$
for any function $w_0\in   L^1(\R^n) \cap L^2 (\R^n)$, where $K(t)$ is the diffusive part and $\mathcal{K}(t)$ is the trasport dissipative one. \\
Moreover for any multi index $\beta$ and for every $p\in [2,+\infty]$ the following estimates hold:

\begin{align*}
\|D^{\beta}\mathcal{K}(t) w_0\|_{L^2}\,\,\leq& \,\,C e^{-ct}\|D^{\beta}w_0\|_{L^2},\\ \\
\|L_0 D^\beta K(t) w_0\|_{L^p}\,\,\leq& \,\, C(|\beta|)\min\{1, t^{-\frac{m}{2}\left(1-\frac{1}{p}\right)-\frac{|\beta|}{2}}\}\|L_0 w_0\|_{L^1}\\
&+C(|\beta|)\min\{1, t^{-\frac{m}{2}\left(1-\frac{1}{p}\right)-\frac{1}{2}-\frac{|\beta|}{2}}\}\|L_{-} w_0\|_{L^1},\\\\
\|L_{-} D^\beta K(t) w_0\|_{L^p}\,\,\leq&\,\, C(|\beta|)\min\{1, t^{-\frac{m}{2}\left(1-\frac{1}{p}\right)-\frac{1}{2}-\frac{|\beta|}{2}}\}\|L_0 w_0\|_{L^1}\\
&+C(|\beta|)\min\{1, t^{-\frac{m}{2}\left(1-\frac{1}{p}\right)-1-\frac{|\beta|}{2}}\}\|L_{-} w_0\|_{L^1},
\end{align*}
where $L_0=[I_1,0]$ and $L_{-}=[0,I_2]$ are the projectors on the null space and on the negative definite part of $B$.

\subsection{$H^s$ Estimates of the Solution}
This section is devoted to study the decay rates of solution to the system (\ref{sistAs}) in the $H^s$-norm.\\
We define
\begin{align}
	E_s := \max \left\{ \left\|U_0\right\|_{L^1}, \, \left\|U_0\right\|_{H^s} \right\}, \quad\qquad D_s := \max \left\{ \left\|\phi_0\right\|_{L^1}, \, \left\|\phi_0\right\|_{H^{s}} \right\}, \nonumber
\end{align}
and the general functional 
\begin{align}
	S_w^{\alpha} := \sup_{0 \le \tau \le t} \left\{ \max \left\{1, \tau^{\alpha} \right\} \left\|w(\tau)\right\|_{H^s} \right\}. \nonumber
\end{align}
Then, we shall prove the following theorem 
\begin{theorem}\label{Deca_Hs}
Let $(U,\phi)$ be a global solution to problem (\ref{sistAs}), with initial conditions
$$
U(x,0)=U_0(x),\qquad \phi(x,0)=\phi_0(x),
$$
with
$$
U_0\in H^{s+1}(\R^n)\cap L^1(\R^n), \qquad \quad\phi_0\in H^{s+1}(\R^n)\cap L^1(\R^n),\, \qquad  \mbox{ for } s > \left[\frac{n}{2}\right]+ 1.
$$
Then the following decay estimates hold:
\begin{align}
&	\|U(t)\|_{H^s}  \leq \min\{1, t^{-\frac{n}{4}}\}C(E_{s+1},D_{s+1}, \overline{\rho}), \nonumber \\
& 	\|\phi(t)\|_{H^{s+1}} \leq \min\{1, t^{-\frac{n}{4}}\}C(E_{s+1}+D_{s+1},\overline{\rho}). \nonumber 
\end{align}
\end{theorem}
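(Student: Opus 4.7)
The approach is to combine Duhamel's representation (\ref{3}) for $U$ with the parabolic Duhamel formula for $\phi$, apply the Bianchini--Hanouzet--Natalini decomposition $\Gamma^h=K+\mathcal{K}$, and close the coupled system via a bootstrap on the time-weighted norms $S_U^{n/4}$ and $S_\phi^{n/4}$. The exponentially decaying transport-dissipative part $\mathcal{K}$ is negligible at every step thanks to the bound $\|D^\beta\mathcal{K}(t)w_0\|_{L^2}\le Ce^{-ct}\|D^\beta w_0\|_{L^2}$, so the analysis concentrates on the diffusive kernel $K(t)$. For the free term $K(t)U_0$, the stated Green-function estimates yield, after summing over $|\beta|\le s$, a bound $\|K(t)U_0\|_{H^s}\le C\min\{1,t^{-n/4}\}E_{s+1}$. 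In parallel, Duhamel for the parabolic equation gives
\begin{equation*}
\phi(t)=e^{-bt}\Gamma^p(t)*\phi_0+a\int_0^t e^{-b(t-\tau)}\Gamma^p(t-\tau)*\rho(\tau)\,d\tau,
\end{equation*}
whose free part is controlled by $C\min\{1,t^{-n/4}\}D_{s+1}$ and whose forcing term is reduced, via standard heat-kernel estimates, to quantities controlled by $\|\rho(\tau)\|_{L^1\cap H^s}$.

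The key structural observation is that both the flux correction $\overline{f}_j'(\overline{U})U-\overline{f}_j(U)$ and the coupling term $h=(0,\overline{\mu}(\rho+\overline{\rho})\nabla\phi)^t$ have zero first component, so the projector $L_0$ annihilates them and only the stronger $L_-$--estimates enter the corresponding Duhamel convolutions; these carry the extra $t^{-1/2}$ factor essential for time integrability. The flux correction is quadratic in $U$ near the origin, so standard Moser-type estimates combined with the global bound from Theorem \ref{global} give $\|\overline{f}_j'(\overline{U})U-\overline{f}_j(U)\|_{L^1\cap H^{s-1}}\lesssim \|U\|_{H^s}^2$. The quadratic piece $\overline{\mu}\rho\nabla\phi$ of $h$ is treated analogously, while the linear-in-$\phi$ piece $\overline{\mu}\,\overline{\rho}\nabla\phi$ is absorbed through the additional derivative gain that the heat kernel provides on $\nabla\phi$, combined with smallness of $\overline{\rho}$.

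Plugging these decay estimates into (\ref{3}) and the parabolic Duhamel formula, and invoking the classical convolution lemma $\int_0^t(1+t-\tau)^{-a}(1+\tau)^{-b}\,d\tau\le C(1+t)^{-\min\{a,b\}}$ valid whenever $a+b>1$, I would obtain an inequality of schematic form
\begin{equation*}
S_U^{n/4}+S_\phi^{n/4}\le C_1(E_{s+1}+D_{s+1})+C_2\,\overline{\rho}\,\bigl(S_U^{n/4}+S_\phi^{n/4}\bigr)+C_3\bigl(S_U^{n/4}+S_\phi^{n/4}\bigr)^2.
\end{equation*}
Taking $\overline{\rho}$ and the initial data norms small enough, as already required by Theorem \ref{global}, absorbs the linear right-hand contribution, and a standard continuity argument closes the bootstrap to yield the two declared decay rates.

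The main obstacle will be the linear-in-$\phi$ chemotactic coupling $\overline{\mu}\,\overline{\rho}\nabla\phi$ in $h$: because it is not quadratic in the unknowns, the smallness needed to close the loop must be extracted from $\overline{\rho}$ itself. Handling it requires simultaneously exploiting (i) the $L_0 h=0$ structure to activate the faster $L_-$--decay of the Green function, (ii) the parabolic gain $\|\nabla\phi(\tau)\|_{L^2}\lesssim (1+\tau)^{-n/4-1/2}$ coming from a spatial derivative of the heat kernel, and (iii) the smallness of the background density $\overline{\rho}$. Balancing these three ingredients within the coupled bootstrap for $U$ and $\phi$ is the delicate core of the argument.
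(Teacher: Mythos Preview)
Your overall plan---Duhamel for both equations, the $K+\mathcal K$ decomposition of $\Gamma^h$, and a bootstrap on time-weighted sup norms---matches the paper's strategy. The structural observation $L_0h=0$ is also correct and is exactly what the paper exploits. The gap is in your treatment of the linear piece $\overline\mu\,\overline\rho\,\nabla\phi$.

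Your ingredient (ii), the claimed parabolic gain $\|\nabla\phi(\tau)\|_{L^2}\lesssim(1+\tau)^{-n/4-1/2}$, cannot be obtained inside the bootstrap. From the parabolic Duhamel formula the forced part of $\nabla\phi$ is $\int_0^t e^{-b(t-\tau)}\nabla\Gamma^p(t-\tau)*\rho(\tau)\,d\tau$; the exponential weight concentrates the integral near $\tau\approx t$, so the decay you recover is dictated by $\|\rho(\tau)\|_{L^2}$ there, i.e.\ $(1+t)^{-n/4}$, and the factor $(t-\tau)^{-1/2}$ from the heat kernel derivative does \emph{not} convert into an extra $(1+\tau)^{-1/2}$ on the output. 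With only $\|\nabla\phi(\tau)\|\lesssim(1+\tau)^{-n/4}$ you cannot close: the convolution $\int_0^t\min\{1,(t-\tau)^{-n/4-1/2}\}\,\overline\rho\,\|\nabla\phi(\tau)\|_{L^1}\,d\tau$ produces a term that, after multiplying by $t^{n/4}$, grows in $t$.

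The paper avoids this by a different maneuver: instead of seeking extra decay from $\phi$, it transfers the gradient from $\nabla\phi$ onto the \emph{hyperbolic} diffusive kernel, writing $K_{1,j+1}(t-\tau)*(\overline\rho\,\partial_{x_j}\phi)=\partial_{x_j}K_{1,j+1}(t-\tau)*(\overline\rho\,\phi)$. Combined with $L_0h=0$ this yields the kernel rate $\min\{1,(t-\tau)^{-n/4-1}\}$, which is integrable in time for every $n\ge1$, against $\overline\rho\,\|\phi(\tau)\|_{L^1}$. The $L^1$ norm of $\phi$ is then controlled uniformly by $e^{-b\tau}\|\phi_0\|_{L^1}+c\|\rho_0\|_{L^1}$, the second term coming from mass conservation $\|\rho(\tau)\|_{L^1}=\|\rho_0\|_{L^1}$. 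This is the step your sketch is missing: the extra half-power of decay has to be manufactured on the hyperbolic Green side, and the $L^1$ control of $\phi$ (not $\nabla\phi$) via mass conservation is what makes the resulting time integral harmless.
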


\begin{proof}
First we consider the parabolic equation
$$
\partial_t \phi =D\Delta\phi +a u -{b\phi},
$$
and, using the Duhamel's formula, we can write the solution as
\begin{equation}
\phi(x,t)=(e^{-{bt}}\Gamma^p(t)\ast \phi_0)(x)+\int_0^t e^{-{b(t-\tau)}}\Gamma^p(t-\tau)\ast a\rho(\tau) d\tau,\nonumber 
\end{equation}
where
$$
\Gamma^p(x,t):=\frac{e^{-\frac{|x|^2}{4 Dt}}}{(4 \pi Dt)^{n/2}}.
$$
Let us start with the $H^{s+1}$ estimate:
\begin{align*}
\|\phi(t)\|_{H^{s+1}}
\leq & ce^{-bt}\|\phi_0\|_{H^{s+1}}+c\int_0^t e^{- b(t-\tau)}\|a\rho(\tau)\|_{L^2}d\tau\nonumber \\
&+c\int_0^t e^{- b(t-\tau)}(t-\tau)^{-\frac{1}{2}}\|a\rho(\tau) \|_{H^s}d\tau\\
\leq& c e^{-b t}\|\phi_0\|_{H^{s+1}}+c S_{U}^{\frac{n}{4}}(t)\int_0^t e^{- b(t-\tau)}(t-\tau)^{-\frac{1}{2}}\min\{1,\tau^{-\frac{n}{4}}\}d\tau  \nonumber \\ 
&+c S_{U}^{\frac{n}{4}}(t)\int_0^t e^{- b(t-\tau)}\min\{1,\tau^{-\frac{1}{4}}\}d\tau .
\end{align*}

So we obtain the following $H^{s+1}$ estimate for $\phi$
\begin{equation*}
\|\phi(t)\|_{H^{s+1}}\leq c(e^{- bt}\|\phi_0\|_{H^{s+1}}+\min\{1,|t-1|^{-\frac{n}{4}}\}S_{U}^{\frac{n}{4}}(t)+\min\{1,t^{-\frac{n}{4}}\}S_{U}^{\frac{n}{4}}(t)),
\end{equation*}
which yields
\begin{equation}\label{Mphi}
S_{\phi_x}^{\frac{n}{4}}(t) \le  C(e^{- bt} \max\{1, t^{\frac{n}{4}}\} \|\phi_0\|_{H^{s+1}}+S_{U}^{\frac{n}{4}}(t)).
\end{equation}
Let us notice that from the previous inequality the decay rate of the function $\phi$ in $H^{s+1}$ is the same rate of the function $U$ in $H^s$.\\
Proceeding in a similar way, it is possible to get the following estimate for the function $\phi$ in the space $L^1$:

\begin{equation}\label{phiL1}
\|\phi(t)\|_{L^1}\leq e^{-bt}\|\phi_0\|_{L^1}+ c \sup_{\tau \in (0,t)}\|\rho(\tau)\|_{L^1},
\end{equation}
where, thanks to the mass conservation, $\sup\limits_{\tau \in (0,t)}\|\rho(\tau)\|_{L^1}=\|\rho_0\|_{L^1}$.

Now we focus on the estimate of function $U$. Let us observe that $\overline{f}_j(U)-\overline{f}_j'(\overline{U})U= U^2r_j(U)$ (where the product should be intended as the tensor product), therefore, using (\ref{3}) and the definition of $E_s$, we obtain
\begin{align}
	\left\|U(t)\right\|_{H^s} \le & c \min\{1, t^{-\frac{n}{4}}\} \left\|U_0\right\|_{L^1} +c e^{-ct} \left\|U_0\right\|_{H^s}\nonumber \\
	&+c \int_0^t \min\{1, (t-\tau)^{-\frac{n}{4}-\frac{1}{2}}\}\sum\limits_{j=1}^{n} \left\|U^2(\tau)r_j(U(\tau))\right\|_{L^1}d\tau  \nonumber\\%
	&+ c\int_0^t  e^{-c(t-\tau)} \sum\limits_{j=1}^n\left\| \partial_{x_j} (U^2(\tau)r_j(U)(\tau)) \right\|_{H^s}d\tau \nonumber \\
	&+ \int_0^t \|\Gamma^h (t-\tau)*h(U+\overline{U},\nabla \phi)(\tau)\|_{H^s} d\tau. \label{4}
\end{align}
At this stage we want to estimate the right hand side of this inequality.

Let us start studying the first integral in (\ref{4}), as follows
\begin{align}
	\int_0^t &\min\{1, (t-\tau)^{-\frac{n}{4}-\frac{1}{2}}\}\sum\limits_{j=1}^n \left\|U^2(\tau)r_j(U(\tau))\right\|_{L^1} d\tau \nonumber \\
	&\le	\int_0^t \min\{1, (t-\tau)^{-\frac{n}{4}-\frac{1}{2}}\}\|U(\tau)\|_{_{L^2}}^2\sum\limits_{j=1}^n\|r_j(U(\tau))\|_{_{L^\infty(|U|\leq \delta_0)}}d\tau \nonumber\\
	&\le	c (S_U^{\frac{n}{4}}(t))^2\int_0^t \min\{1, (t-\tau)^{-\frac{n}{4}-\frac{1}{2}}\} \min\{1,\tau^{-\frac{n}{2}}\} d\tau. \nonumber
\end{align}
Then from Lemma 5.2 of \cite{BiHaNa}, we deduce 
\begin{align}
	c\int_0^t &\min\{1, (t-\tau)^{-\frac{n}{4}-\frac{1}{2}}\} \left\|U^2(\tau)r_j(U)\right\|_{L^1} d\tau \nonumber \\
	&	\le	c \int_0^t \min\{1, (t-\tau)^{-\frac{n}{4}-\frac{1}{2}}\} \min\{1,\tau^{-\frac{n}{2}}\} (S_U^{\frac{n}{4}}(t))^2  \nonumber \\
	&\le c\min\{1,t^{-\nu} \}(S_U^{\frac{n}{4}}(t))^2, \label{5}
\end{align}
where $\nu=\min \left\{\frac{n}{4}+\frac{1}{2},\frac{n}{2},\frac{3}{4}n-\frac{1}{2}\right\}$.\\
In order to estimate the next term in (\ref{4}), we use Lemma 5.3 of \cite{BiHaNa} which yields
\begin{align}
\sum\limits_{j=1}^n	\left\|\partial_{x_j} (U^2r_j(U))\right\|_{H^s} &\le \sup_{j=1,...,n}c(\delta_0,\|u\|_{H^s},\|r_j\|_{C^{s+|\beta|}(|u|\leq\delta_0)})\left\|U\right\|_{L^\infty} \sum_{j=1}^n \left\|\partial_{x_j} U\right\|_{H^s} \nonumber\\
	&\le   c \left\|U\right\|_{H^s} \left\| U\right\|_{H^{s+1}}\label{eq4}.
\end{align}
Then we have
\begin{align}
\int_0^t e^{-c(t-\tau)} \sum\limits_{j=1}^n \left\|\partial_{x_j} (U^2r_j(U))(\tau)\right\|_{H^s} 
 \le &  c\int_0^t e^{-c(t-\tau)} \left\|U(\tau)\right\|_{H^s} \left\| U(\tau) \right\|_{H^{s+1}} d\tau \nonumber \\
 \le &  cS_U^{\frac{n}{4}} (t)E_{s+1}\int_0^t e^{-c(t-\tau)} \min\{1,\tau^{-\frac{n}{4}}\}  d\tau \nonumber \\
 \le &  c  \min\{1,t^{-\frac{n}{4}}\} S_U^{\frac{n}{4}} (t)E_{s+1} . \label{6}
\end{align}
In the last inequalities, we have used Lemma 5.2 of \cite{BiHaNa} and the estimate of Theorem (\ref{global}) to controll the norm of the function $U$ in $H^s$.

Finally we estimate the last integral of (\ref{4}) in the following way
\begin{align*}
	\int_0^t \|\Gamma^h (t-\tau)*h(U+\overline{U},\nabla \phi)(\tau)\|_{H^s}d\tau\leq &\int_0^t \|\mathcal{K}(t-\tau) h(U+\overline{U},\nabla \phi)(\tau)\|_{H^s}d\tau \\
	&+\int_0^t\|K(t-\tau) h(U+\overline{U},\nabla \phi)(\tau)\|_{H^s}d\tau.
\end{align*}
For the first term, we have:
\begin{align}
	\int_0^t \|\mathcal{K}(t-\tau) h(U+\overline{U},\nabla \phi)(\tau)\|_{H^s}d\tau
	\leq&\int_0^t ce^{-c(t-\tau)}\|\nabla \phi(\tau)\|_{H^s}(\overline{\rho}+\|\rho(\tau)\|_{H^s})d\tau\nonumber\\
	\le& \overline{\rho} S_{\phi_x}^{\frac{n}{4}}(t)\int_0^t ce^{-c(t-\tau)}\min\{1,\tau^{-\frac{n}{4}}\}d\tau\nonumber\\
	&+S_{\phi_x}^{\frac{n}{4}}(t)S_{U}^{\frac{n}{4}}(t) \int_0^t ce^{-c(t-\tau)}\min\{1,\tau^{-\frac{n}{2}}\}d\tau \nonumber \\
\leq & c\min\{1,t^{-\frac{n}{4}}\}\overline{\rho}S_{\phi_x}^{\frac{n}{4}}(t)\nonumber \\
&+c\min\{1,t^{-\frac{n}{2}}\}S_{\phi_x}^{\frac{n}{4}}(t)S_{U}^{\frac{n}{4}}(t)\label{K_0}.
\end{align}
In order to complete our estimate, we need to study the contribution of the diffusive part of the hyperbolic Green function. Since we are interested in the slowest decay estimate of the solution $U$, we focus on the first component: 
\begin{align}
\int_0^t & \|K(t-\tau) h(U+\overline{U},\nabla \phi)(\tau)\|_{H^s}d\tau\nonumber\\
\leq & \int_0^t   \sum_{j=1}^{n} \left\| K_{1j+1}(t-\tau)\partial_{x_j} \phi(\rho+\overline{\rho})(\tau) \right\|_{H^s}d\tau\nonumber\\
	\leq & c\int_0^t  \min\{1,(t-\tau)^{-\frac{n}{4}-1}\}\overline{\rho}\|\phi(\tau)\|_{L^1}d\tau\nonumber\\
&+c S_{\phi_x}^{\frac{1}{4}}(t) S_{U}^{\frac{1}{4}}(t) \int_0^t  \min\{1,(t-\tau)^{-\frac{n}{4}-\frac{1}{2}}\}\min\{1,\tau^{-\frac{n}{2}}\}d\tau \label{K_1}.
\end{align}

Thanks to (\ref{phiL1}), we deduce that
\begin{align}
c\int_0^t  \min\{1,(t-\tau)^{-\frac{n}{4}-1}\}\overline{\rho}\|\phi(\tau)\|_{L^1}d\tau
&\leq c\min\{1,t^{-\frac{n}{4}-1}\}\overline{\rho}\|\phi_0\|_{L^1}+ c \|\rho_0\|_{L^1}\overline{\rho}t^{-\frac{n}{4}}.\label{K_2}
\end{align}
In conclusion, substituting (\ref{5}), (\ref{6}), (\ref{K_0}),(\ref{K_2}), in (\ref{4}), we have
\begin{align*}
\left\|U(t)\right\|_{H^s} 	\le & c\left( \min\{1, t^{-\frac{n}{4}}\} E_{s} +  \min\{1, t^{-\frac{n}{4}}\} S_{U}^{\frac{n}{4}} E_{s+1} +  \min\{1,t^{-\nu}\}(S_{U}^{\frac{n}{4}}(t))^2\right.\nonumber\\
&+ \min\{1,t^{-\frac{n}{4}}\}\overline{\rho}S_{\phi_x}^{\frac{n}{4}}(t)+	 \min\{1,t^{-\frac{n}{2}}\}S_{\phi_x}^{\frac{n}{4}}(t)S_{U}^{\frac{n}{4}}(t) \\\nonumber
&+ \left.\min\{1,t^{-\frac{n}{4}}\}S_{\phi_x}^{\frac{n}{4}}(t)S_{U}^{\frac{n}{4}}(t)+\overline{\mu} \min\{1,t^{-\frac{n}{4}-1}\}\overline{\rho}\|\phi_0\|_{L^1}+  \|\rho_0\|_{L^1}\overline{\rho}t^{-\frac{n}{4}}\right).
\end{align*}
So we obtain 
\begin{align*}
S_{U}^{\frac{n}{4}}(t)	\le & c \left(E_{s}+ S_{U}^{\frac{n}{4}}(t) E_{s+1} +(S_{U}^{\frac{n}{4}}(t))^2+\overline{\rho}S_{\phi_x}^{\frac{n}{4}}(t)+\overline{\rho}D_s+ S_{\phi_x}^{\frac{n}{4}}(t) S_{U}^{\frac{n}{4}}(t)\right).
\end{align*}

Now, we substitute inequality (\ref{Mphi}) in the previous one, obtaining,
for $t>\delta>0$, 
\begin{align}
S_{U}^{\frac{n}{4}}(t)	\le & C(1 + S_{U}^{\frac{n}{4}}(t)+ (S_{U}^{\frac{n}{4}}(t))^2),\nonumber
\end{align}
where $C=C(E_s,D_{s+1},\overline{\rho})$.

From this inequality we deduce that, if the initial data and the perturbation $\overline{\rho}$ are sufficiently small, then  we have
\begin{align}
&	\left\|U(t)\right\|_{H^s} \le C \min \{1, t^{-\frac{n}{4}}\} , \nonumber\\ \nonumber\\ 
&	\left\|\phi(t)\right\|_{H^{s+1}} \le C \min \{1, t^{-\frac{n}{4}}\} . \nonumber
\end{align}
\end{proof}

\subsection{$L^\infty$ Estimates of the Solution}
We now estimate the $L^\infty$-norm of solutions to the system (\ref{sistAs}). As done before, we define the functional
\begin{align}
	R_w^{\alpha}(t):= \sup_{0 \le \tau \le t} \left\{ \max \left\{1, \tau^{\alpha}\right\} \left\|w(\tau)\right\|_{L^\infty}\right\},\nonumber
\end{align}
and 
\begin{align}
	E_s := \max \left\{ \left\|U_0\right\|_{L^1}, \, \left\|U_0\right\|_{H^s} \right\}, \qquad\quad D_s := \max \left\{ \left\|\phi_0\right\|_{L^1}, \, \left\|\phi_0\right\|_{H^s} \right\}. \nonumber
\end{align}
We want to prove the following theorem 
\begin{theorem}\label{Deca_Linfty}
Let $(U,\phi)$ be a global solution to system (\ref{sistAs}), with initial conditions
$$
U(x,0)=U_0(x),\qquad \phi(x,0)=\phi_0(x),
$$
with
$$
U_0\in H^{s+1}(\R^n)\cap L^1(\R^n), \qquad \quad \phi_0\in H^{s+1}(\R^n)\cap L^1(\R^n), \qquad \, \mbox{ for } s=\left[\frac{n}{2}\right]+2.
$$
Then the following decay estimates hold:
$$
\|U(t)\|_{L^\infty}\leq \min\{1, t^{-\frac{n}{4}}\}C(E_{s},D_{s+1}, \overline{\rho}), \qquad \|\phi(t)\|_{L^\infty}\leq \min\{1, t^{-\frac{n}{4}}\}C(E_{s},D_{s+1}, \overline{\rho}).
$$
\end{theorem}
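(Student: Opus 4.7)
The plan is to mirror the proof of Theorem \ref{Deca_Hs}, replacing $H^s$-norms by $L^\infty$-norms on the left-hand side and using the Bianchini--Hanouzet--Natalini Green function estimates at $p=\infty$ together with the $H^s$-decay already established in Theorem \ref{Deca_Hs} (combined with the Sobolev embedding $H^s\hookrightarrow L^\infty$, valid since $s=[n/2]+2>n/2$) to control the nonlinear products on the right-hand side.

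First I would derive the $L^\infty$-bound for $\phi$ from Duhamel's formula. The heat-kernel contribution is bounded by $c\min\{1,t^{-n/2}\}D_s$ from $L^1$ data, while Young's inequality gives $\|\Gamma^p(t-\tau)\ast\rho(\tau)\|_{L^\infty}\le c\|\rho(\tau)\|_{L^\infty}$; the exponential damping $e^{-b(t-\tau)}$ absorbs the time integration and yields
$$R_\phi^{n/4}(t)\le C\Bigl(e^{-bt}\max\{1,t^{n/4}\}D_s + R_U^{n/4}(t)\Bigr),$$
so that the $L^\infty$-decay of $\phi$ is controlled by that of $U$.

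Next I would bound $\|U(t)\|_{L^\infty}$ term by term in the representation (\ref{3}). The dissipative linear piece $\mathcal{K}(t)U_0$ decays exponentially, while the diffusive piece satisfies $\|K(t)U_0\|_{L^\infty}\le c\min\{1,t^{-n/2}\}\|L_0 U_0\|_{L^1}+c\min\{1,t^{-n/2-1/2}\}\|L_- U_0\|_{L^1}$ by the estimates of \cite{BiHaNa} with $p=\infty$, well within the target rate $t^{-n/4}$. For the quadratic convection remainder I use $\|U^2 r_j(U)\|_{L^1}\le c\|U\|_{L^2}^2\le c(S_U^{n/4})^2\min\{1,\tau^{-n/2}\}$ from Theorem \ref{Deca_Hs}, convolved with the kernel $\min\{1,(t-\tau)^{-n/2-1/2}\}$ coming from $\partial_{x_j}K(t-\tau)$; Lemma~5.2 of \cite{BiHaNa} then produces a contribution of order $\min\{1,t^{-n/4}\}\,(S_U^{n/4})^2$.

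The delicate term is the chemotactic source $\Gamma^h(t-\tau)\ast h(U+\overline{U},\nabla\phi)$, which I split into the quadratic piece $\overline{\mu}\rho\nabla\phi$ (controllable by $R_U^{n/4}R_\phi^{n/4}$ against integrable Green function kernels) and the genuinely linear piece $\overline{\mu}\,\overline{\rho}\nabla\phi$. As in (\ref{K_1})--(\ref{K_2}) of the $H^s$ proof, I transfer the $x_j$-derivative onto the Green function so that $K(t-\tau)\ast(\overline{\rho}\partial_{x_j}\phi)$ is bounded by the $L^\infty$-estimate of $\partial_{x_j}K(t-\tau)$ acting on $\overline{\rho}\phi(\tau)$, with $\|\phi(\tau)\|_{L^1}$ uniformly controlled through (\ref{phiL1}) and mass conservation. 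Collecting all contributions produces
$$R_U^{n/4}(t)\le C(E_s,D_{s+1},\overline{\rho})\Bigl(1+R_U^{n/4}(t)+(R_U^{n/4}(t))^2\Bigr),$$
from which smallness of the initial data and of $\overline{\rho}$ yields $R_U^{n/4}(t)\le C$, and re-injecting this into the first step closes the $L^\infty$-bound for $\phi$. The principal obstacle is, exactly as in Theorem \ref{Deca_Hs}, the linear $\overline{\rho}\nabla\phi$ piece of the chemotactic source: without shifting the derivative onto the Green function and without the $L^1$-control of $\phi$ provided by mass conservation, this term prevents closure of the decay estimate at the rate $t^{-n/4}$, and it is the reason why smallness of the constant state $\overline{\rho}$ is needed in addition to smallness of the initial data.
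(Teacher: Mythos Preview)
Your proposal is correct and follows essentially the same route as the paper: Duhamel for $\phi$, the Bianchini--Hanouzet--Natalini decomposition of $\Gamma^h$ at $p=\infty$ for $U$, and the key move of shifting the derivative onto the Green kernel for the linear piece $\overline{\rho}\,\nabla\phi$ together with the $L^1$-control of $\phi$ from (\ref{phiL1}) and mass conservation. Two small bookkeeping differences worth noting: the paper derives a separate bound $R_{\phi_x}^{n/4}\le C(D_{s+1}+R_U^{n/4})$ for $\nabla\phi$ (you wrote $R_\phi^{n/4}$ where $R_{\phi_x}^{n/4}$ is needed for the product $\rho\,\nabla\phi$), and it controls the quadratic chemotactic pieces through the already-bounded $H^s$-functionals $S_U^{n/4},S_{\phi_x}^{n/4}$ from Theorem~\ref{Deca_Hs}, so that the closing inequality is actually \emph{linear}, $R_U^{n/4}\le C(1+R_U^{n/4})$, rather than the quadratic one you state.
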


\begin{proof}
Proceeding as done before, we obtain  $L^\infty $ estimates for $\phi$ and $\nabla\phi$. First of all we show that
\begin{align*}
\|\phi(t)\|_{L^\infty}
\leq& c e^{-bt}\|\phi_0\|_{L^\infty}+c \int_0^t e^{-b(t-\tau)}\|a\rho(\tau)\|_{L^\infty} d\tau\\
\leq& c e^{-bt}\|\phi_0\|_{L^\infty}+ c R_{U}^{\frac{n}{4}}(t)\int_0^t e^{-b(t-\tau)} \min\{1,\tau^{-\frac{n}{4}}\}d\tau,
\end{align*}
which yields
\begin{equation}
\|\phi(t)\|_{L^\infty}\leq c \left(e^{- bt}\|\phi_0\|_{L^\infty}+\min\{1,t^{-\frac{n}{4}}\}R_{U}^{\frac{n}{4}}(t)\right).\nonumber
\end{equation}
In a similar way, we get
\begin{equation}
\|\nabla\phi(t)\|_{L^\infty}\leq c\left(e^{- bt}\|\nabla \phi_0\|_{L^\infty}+\min\{1,|t-1|^{-\frac{n}{4}}\}R_{U}^{\frac{n}{4}}(t)\right).\nonumber
\end{equation}
This means that
\begin{align}
R_{\phi}^{\frac{n}{4}}&\leq C(D_{s}+R_{U}^{\frac{n}{4}}(t)), \label{phiLinf}\\
R_{\phi_x}^{\frac{n}{4}}&\leq C(D_{s+1}+R_{U}^{\frac{n}{4}}(t)).\label{phixLinf}
\end{align}

Now let us consider the solution of our system written in the form (\ref{mod4}), i.e.
\begin{align}
U(t)=&\Gamma^h(t) *U_0+ \sum_{j=1}^n \int_0^t \partial_{x_j} \Gamma^h (t-\tau)* \left[f_j'(\overline{U})U(\tau)-f_j(U(\tau))\right]d\tau\nonumber \\ &+\int_0^t \Gamma^h (t-\tau)*h(U+\overline{U},\nabla \phi)(\tau)d\tau. \nonumber	
\end{align}
Thanks to the decomposition of the Green function, we estimate the $L^\infty$-norm of $U$ in the following way
\begin{align}
\|U(t)\|_{L^\infty} 
 \le & c \min\{1,t^{- \frac{n}{2}}\}\|U_0\|_{L^1}+c e^{-ct}\|U_0\|_{H^s} \nonumber \\
 &+ \int_0^t\min \{ 1, (t-\tau)^{-\frac{n}{2}-\frac{1}{2}}\} 	  \sum_{j=1}^{n} \left\|U^2 r_j(U)(\tau)\right\|_{L^1} d\tau  \nonumber\\
&+   \int_0^t e^{-c(t-\tau)} \sum_{j=1}^{n}   \left\| \partial_{x_j}( U^2 r_j(U)(\tau)) \right\|_{H^{s}} d \tau\nonumber \\
& + \int_0^t \|\Gamma^h (t-\tau)h(U+\overline{U},\nabla\phi)(\tau)\|_{L^\infty}d \tau . \label{7}
\end{align}
The third term in the r.h.s. of (\ref{7}) is estimated as:
\begin{align}
 \int_0^t\min \{ 1, (t-\tau)^{-\frac{n}{2}-\frac{1}{2}}\} 	\sum_{j=1}^{n}  \left\|U^2 r_j(U)(\tau)\right\|_{L^1} d\tau 
&	\le  c \min\{1,t^{- \frac{n}{2}}\} E_{s}^2. \nonumber
\end{align}
While the next term in (\ref{7}) can be estimated as
\begin{align}
 \sum_{j=1}^n\int_0^t e^{-c(t-\tau)}  \left\| \partial_{x_j}( U^2 r_j(U)(\tau)) \right\|_{H^{s}} d\tau 
 & \le c R_{U}^{\frac{n}{4}}(t)E_{s+1} \int_0^t e^{-c(t-\tau)}  \min \{1, \tau^{-\frac{n}{4}}\}d\tau \nonumber\\
 & \le  c  \min\{1, t^{-\frac{n}{4}}\} R_{U}^{\frac{n}{4}}(t)E_{s+1}, \nonumber
 \end{align}
where we have used the inequality (\ref{eq4}). 

Proceeding in a similar way, we control the last term in (\ref{7}) as follows
\begin{align*}
\int_0^t \|\Gamma^h (t-\tau)*h(U+\overline{U},\nabla \phi)(\tau)\|_{L^\infty}d\tau\leq &\int_0^t \|K(t-\tau) h(U+\overline{U},\nabla\phi)(\tau)\|_{L^\infty}d\tau\\
&+\int_0^t\|\mathcal{K}(t-\tau) h(U+\overline{U},\nabla\phi)(\tau)\|_{L^\infty}d\tau.
\end{align*}
Let us start from the second integral on the right hand side:
\begin{align}
	\int_0^t \|\mathcal{K}(t-\tau) h(U+\overline{U},\nabla\phi)(\tau)\|_{L^\infty}d\tau 
	\leq& c \min\{1,t^{-\frac{n}{4}}\} \overline{\rho} S_{\phi_x}^{\frac{n}{4}}(t) \nonumber \\
	&+c\min\{1,t^{-\frac{n}{2}}\} S_{\phi_x}^{\frac{n}{4}}(t) S_{U}^{\frac{n}{4}}(t), 	\nonumber
\end{align}	
thanks to Lemma 5.2 of \cite{BiHaNa} . \\
Now we need to estimate the contributions of the diffusive part of the hyperbolic Green function 
\begin{align}
\int_0^t &\|K(t-\tau) h(U+\overline{U},\nabla \phi)(\tau)\|_{L^\infty}d\tau\nonumber\\
\leq & \int_0^t\sum_{j=1}^{n}\|K_{1j+1}(t-\tau)\rho(\tau)\partial_{x_j}\phi(\tau)\|_{L^\infty}d\tau 
+\int_0^t\sum\limits_{j=1}^{n}\|\partial_{x_j} K_{1j+1}(t-\tau)\overline{\rho}\phi(\tau)\|_{L^\infty}d\tau \nonumber\\
\leq &  S_{U}^{\frac{n}{4}}(t)S_{\phi_x}^{\frac{n}{4}}(t)\int_0^t\min\{1, (t-\tau)^{-\frac{n}{2}-\frac{1}{2}}\}\min\{1, {\tau}^{-\frac{n}{2}}\} d \tau \nonumber \\
&+ c  \int_0^t\min\{1, (t-\tau)^{-\frac{n}{2}-1}\}\overline{\rho}e^{-b \tau}d\tau + c \int_0^t\min\{1, (t-\tau)^{-\frac{n}{2}-1}\}\overline{\rho}\|\rho_0\|_{L^1}d\tau \nonumber \\
\leq& \min\{1,t^{-\frac{n}{2}}\} S_{U}^{\frac{n}{4}}(t)S_{\phi_x}^{\frac{n}{4}}(t)+c\left( \min\{1, {t}^{-\frac{n}{2}-1}\}\overline{\rho}+ t^{-\frac{n}{2}}\overline{\rho}\|\rho_0\|_{L^1}\right)\nonumber.
\end{align}

In conclusion, we obtain 
\begin{align}
	\left\|U(t)\right\|_{L^\infty} \le & c \left[\min\{1,t^{- \frac{n}{2}}\}\left\|U_0\right\|_{L^1}+ e^{-ct}\left\|U_0\right\|_{H^{s}} + \min\{1,t^{-\frac{n}{2}}\} E^2_{s}\right.\nonumber \\	
	&+\min\{1, t^{-\frac{1}{2}}\} R_{U}^{\frac{n}{4}}(t)E_{s+1}+\min\{1,t^{-\frac{n}{4}}\}\overline{\rho}S_{\phi_x}^{\frac{n}{4}}(t)
\nonumber \\
	&\left.	+\min\{1,t^{-\frac{n}{2}}\}S_{\phi_x}^{\frac{n}{4}}(t)S_{U}^{\frac{n}{4}}(t)+\overline{\mu}(\min\{1, {t}^{-\frac{n}{2}-1}\}\overline{\rho}+  t^{-\frac{n}{2}}\overline{\rho}\|\rho_0\|_{L^1})\right].\nonumber
\end{align}

Let us recall that  $S_U^{\frac{n}{4}}, \,S_{\phi_x}^{\frac{n}{4}}\leq C$. Then, substituting  inequalities (\ref{phiLinf}) and (\ref{phixLinf}) in the previous one and multiplying by $\max\{1,t^{\frac{n}{4}}\}$, we obtain
\begin{align*}
	R_{U}^{\frac{n}{4}}(t) &\le  C(1 +R_{U}^{\frac{n}{4}}(t)),\nonumber
\end{align*}
where $C=C (E_{s+1},D_{s+1},\overline{\rho})$. \\
In conclusion, if initial data and the constant state $\overline{\rho}$ are sufficiently small, 
 for the $L^\infty$-norm of the solution $(U,\phi)$ we obtain the following estimates
\begin{align}
	\left\|U(t)\right\|_{L^\infty} &\le  \min\{1, t^{-\frac{n}{4}}\}C(E_{s+1},D_{s+1}, \overline{\rho}),  \nonumber 		\\ \nonumber\\
	\left\|\phi(t)\right\|_{L^\infty} &\le  \min\{1, t^{-\frac{n}{4}}\}C(E_{s+1},D_{s+1}, \overline{\rho}).  \nonumber 		
\end{align}
\end{proof}

\section*{Acknowledgments} 
We would like to thank R. Natalini for several helpful discussions, for spending time reading and revising the manuscript carefully, and thus considerably improving the presentation. This work has been partially supported by a PRIN project 2007 on ``Sistemi iperbolici non lineari: perturbazioni singolari, comportamento asintotico e applicazioni''.

\bibliographystyle{plain}

\bibliography{bib_tesi1}

\end{document}